\newtheorem{theo+}              {Theorem}           [section]
\newtheorem{prop+}  [theo+]     {Proposition}
\newtheorem{coro+}  [theo+]     {Corollary}
\newtheorem{lemm+}  [theo+]     {Lemma}
\newtheorem{exam+}  [theo+]     {Example}
\newtheorem{rema+}  [theo+]     {Remark}
\newtheorem{defi+}  [theo+]     {Definition}
\newenvironment{theorem}{\begin{theo+}}{\end{theo+}}
\newenvironment{proposition}{\begin{prop+}}{\end{prop+}}
\newenvironment{corollary}{\begin{coro+}}{\end{coro+}}
\theoremstyle{plain} \theoremstyle{remark}
\newtheorem{remark}{Remark}
\newtheorem{example}{Example}
\def\E{/\kern-1.0em \equiv }
\author{}
\begin{document}
\title[Equivariant and stable biharmonic maps]{A note on equivariant biharmonic maps and stable biharmonic maps}
\subjclass{58E20} \keywords{Equivariant biharmonic maps, rotationally symmetric biharmonic maps, Second variation formula of biharmonic maps, stable biharmonic maps.}
\author{Ye-Lin Ou $^{*}$}
\thanks{$^{*}$ This work was supported by a grant from the Simons Foundation ($\#427231$, Ye-Lin Ou).}
\address{Department of
Mathematics,\newline\indent Texas A $\&$ M University-Commerce,
\newline\indent Commerce, TX 75429,\newline\indent USA.\newline\indent
E-mail:yelin.ou@tamuc.edu }
\date{10/06/2019}
\maketitle
\section*{Abstract}
\begin{quote}  In this note, we generalize biharmonic equation for rotationally symmetric maps (\cite{BMO07a}, \cite{WOY14}, \cite{MOR15}) to equivariant maps between model spaces and use it to give a complete classification of rotationally symmetric conformal biharmonic maps from a $4$-dimensional space form into a $4$-dimensional model space. We also give an improved second variation formula for biharmonic maps into a space form and use it to prove that there exists no stable  proper biharmonic maps with constant square norm of tension field from a compact Riemannian manifold without boundary into a space form of positive sectional curvature.
{\footnotesize } 
\end{quote}
\section{Equivariant biharmonic maps and a classification of rotationally symmetric conformal biharmonic maps}
The warped product manifold of the type $I\times_fS^{m-1}$, where $f:I\to (0,\infty)$ is a real-valued function from an interval $I\subset \mathbb{R}$,  plays a very important role in mathematics and physics. Each of the three space forms of complete simply connected Riemannian manifolds of constant sectional curvature $c>0, c=0, \;{\rm or}\; c<0$ is locally a warped product of this type. In physics, Lorentz warped products of this type provide many exact solutions to the  Einstein field equations or the modified field equations, including the Schwarzschild solution and Robertson-Walker models (see e.g., \cite{Ch17} for more details).  Finally, the model spaces used by Greene and Wu \cite{GW79} to develop a good function  theory on complete simply connected Riemannian manifolds of nonpositive sectional curvature (i.e., Cartan-Hadamard manifolds) are also warped product of this type.

Recall that a {\it plole} of a manifold is a point $o\in M$ where the exponential map $exp_o \, : \, T_oM \rightarrow M$ is a diffeomorphism. A Riemannian manifold $(M^m(o),\,g)$ with a pole $o$ is called a \emph{model} if  every linear isometry of $T_oM$ is the differential at $o$ of an isometry of $M$.  A special feature of a model  lies in the fact that it can be described, by using geodesic polar coordinates centered at the pole $o$, as a warped product:
\begin{equation}\notag
(M^m(o),\,g) = \left (  [0,+\infty)\times S^{m-1},  dr^2 + f^2(r)\, g^{S^{m-1}}\right ),
\end{equation}
where $ (\,S^{m-1},\, g^{S^{m-1}} \, )$ is the standard $(m-1)$-dimensional unit sphere, and  $f:[0,\infty)\to [0,\infty) $ is a smooth function satisfying the following boundary conditions: 
\begin{equation}\notag\label{condizioni-su-f}
f(0)=0 \,, \quad f'(0)=1 \quad {\rm and}\quad f(r)>0 \quad {\rm if} \,\, r>0.
\end{equation}

Note (see \cite{GW79}) that in this model, $r$ measures the geodesic distance from the pole $o$, and the warping function $f$ relates to the the radial curvature by the following Jacobi equation:
\begin{equation}\notag\label{equazionecurvaturaradiale}
    f''(r)\,+\,K(r)\,f(r)\,=\,0 \,\, ,\,\, f(0)=0\,, \,\,f'(0)=1,
\end{equation}
where the radial curvature $K(r)$ ($r>0$) of the model means the sectional curvature of any plane containing the radial direction $\partial_r$.

We use $M^m_f(o)$ to denote a model  $(M^m(o),\,g)$ defined above, but, to abuse the notation and terminology,  we sometimes also allow $f(r)$ to define on a finite interval $[0,\,b]$, with $f(b)=0$ and $f'(b)=-1$. Sometimes, we also use the standard warped product notation like $I\times_fS^{m-1}$. In the following, we will use the following warped product models to represent a space form of constant sectional curvature $K$:
\begin{equation}\notag
\begin{aligned}
K=0:\;  &\mathbb{R}^m=M^m_f(o), \;{\rm with}\;f(r)=r,\;{\rm for}\; r\in [0, + \infty)\\
K=1:\;  &S^m\setminus\{{\rm south\;pole}\}=M^m_f(o), \;{\rm with}\\
&f(r)=\sin r,\;{\rm for}\; r\in [0,\pi];\\
K=-1:\;  &H^m=M^m_f(o), \;{\rm with}\\\notag &f(r)=\sinh r\, \;{\rm for}\; r\in [0, + \infty).
\end{aligned}
\end{equation}
Recall (see \cite{MR13}) that {\bf an equivariant map} is a map $\phi:(\mathbb{R}^+\times S^{m-1}, {\rm d}r^2+\sigma^2(r)g^{S^{m-1}})\to (\mathbb{R}^+\times S^{n-1}, {\rm d}\rho^2+\lambda^2(\rho)g^{S^{n-1}})$ between two model spaces defined by $\phi(r, \theta)=(\rho(r), \varphi(\theta))$, where $\varphi: S^{m-1}\to S^{n-1}$ is an eigenmap with eigenvalue $2e(\varphi)=2k>0$. In particular, a map  $\phi:(\mathbb{R}^+\times S^{m-1}, {\rm d}r^2+\sigma^2(r)g^{S^{m-1}})\to (\mathbb{R}^+\times S^{m-1}, {\rm d}\rho^2+\lambda^2(\rho)g^{S^{m-1}})$ with $\phi(r, \theta)=(\rho(r), \theta)$ is called a {\bf rotationally symmetric map}.

Now we are ready to study equivariant biharmonic maps. First, we have the following biharmonic equation for equivariant maps between two models.
\begin{proposition}\label{MT1}
For an eigenmap $\varphi: S^{m-1}\to S^{n-1}$ with eigenvalue $2e(\varphi)=2k$, the equivariant map $\phi:(\mathbb{R}^+\times S^{m-1}, {\rm d}r^2+\sigma^2(r)g^{S^{m-1}})\to (\mathbb{R}^+\times S^{n-1}, {\rm d}\rho^2+\lambda^2(\rho)g^{S^{n-1}})$ with $\phi(r, \theta)=(\rho(r), \varphi(\theta))$ is a biharmonic map if and only if $\rho$ is a solution of
\begin{equation}\label{Oe190}
\begin{cases}
F=\rho''+(m-1)\frac{\sigma'}{\sigma}\rho'-2k\,\frac{\lambda\lambda'(\rho)}{\sigma^2},\\
F''+(m-1)\frac{\sigma'}{\sigma}F'-2k\,\frac{(\lambda\lambda')'(\rho)}{\sigma^2}F=0,
\end{cases}
\end{equation}
or equivalently,
\begin{equation}\label{BiL}
\begin{aligned}
\Delta^2 \rho -2k \Delta\left(\frac{\lambda\lambda'(\rho)}{\sigma^2}\right) -2k\,\frac{(\lambda\lambda')'(\rho)}{\sigma^2}\left(\Delta \rho-2k\frac{\lambda\lambda'(\rho)}{\sigma^2}\right)=0.
\end{aligned}
\end{equation}
\end{proposition}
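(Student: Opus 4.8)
The plan is to evaluate the bitension field of $\phi$ directly and to check that it is a radial field whose only component equals the left-hand side of the second equation in \eqref{Oe190}. Recall that $\phi$ is biharmonic precisely when
\begin{equation}\notag
\tau_2(\phi)=-\Delta^\phi\tau(\phi)-\sum_{i=1}^{m}R^N\big(\tau(\phi),d\phi(e_i)\big)d\phi(e_i)=0,
\end{equation}
with $\Delta^\phi=\sum_i\big(\nabla^\phi_{e_i}\nabla^\phi_{e_i}-\nabla^\phi_{\nabla^M_{e_i}e_i}\big)$ the (trace) rough Laplacian on $\phi^{-1}TN$ and $R^N$ the curvature of the target. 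I would work in the $g$-orthonormal frame $e_0=\partial_r$, $e_a=\sigma^{-1}\hat e_a$ $(a=1,\dots,m-1)$, where $\{\hat e_a\}$ is $g^{S^{m-1}}$-orthonormal, so that $d\phi(e_0)=\rho'\,\partial_\rho$ and $d\phi(e_a)=\sigma^{-1}d\varphi(\hat e_a)$ is tangent to the target sphere, with $\sum_a|d\phi(e_a)|^2=2k\lambda^2/\sigma^2$ by the eigenmap normalization $|d\varphi|^2=2k$. The three warped-product identities I would use throughout are $\nabla^N_{\partial_\rho}\partial_\rho=0$, $\nabla^N_X\partial_\rho=\tfrac{\lambda'}{\lambda}X$ for $X$ tangent to the target sphere, and $R^N(\partial_\rho,X)X=-\tfrac{\lambda''}{\lambda}|X|^2\,\partial_\rho$.

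First I would compute the tension field. The radial direction gives $(\nabla d\phi)(e_0,e_0)=\rho''\,\partial_\rho$ because $\nabla^M_{\partial_r}\partial_r=0$, while tracing over the spherical directions produces the domain-warping term $(m-1)\tfrac{\sigma'}{\sigma}\rho'$ and, through $\nabla^N_{\cdot}\partial_\rho$ acting on the energy of $\varphi$, the target-warping term $-2k\tfrac{\lambda\lambda'(\rho)}{\sigma^2}$; here the harmonicity $\tau(\varphi)=0$ of the eigenmap is exactly what makes the spherical component of $\sum_a(\nabla d\phi)(e_a,e_a)$ vanish. This gives $\tau(\phi)=F\,\partial_\rho$ with $F$ as in the first line of \eqref{Oe190}.

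Next I would substitute $\tau(\phi)=F\partial_\rho$ into the bitension field. Since $F$ depends on $r$ alone, the radial direction contributes $F''\partial_\rho$ to $\Delta^\phi\tau(\phi)$. Along a spherical direction $\nabla^\phi_{e_a}(F\partial_\rho)=F\tfrac{\lambda'}{\lambda}d\phi(e_a)$, and a second covariant differentiation, using $\nabla^\phi_{e_a}d\phi(e_a)=d\phi(\nabla^M_{e_a}e_a)+(\nabla d\phi)(e_a,e_a)$ and subtracting the correction $\nabla^\phi_{\nabla^M_{e_a}e_a}(F\partial_\rho)$, produces after tracing the first-order term $(m-1)\tfrac{\sigma'}{\sigma}F'$ together with a zeroth-order radial term: the pieces tangent to $d\varphi$ cancel, and the radial remainder, weighted by $F\tfrac{\lambda'}{\lambda}$, collapses through $\sum_a(\nabla d\phi)(e_a,e_a)=\big((m-1)\tfrac{\sigma'}{\sigma}\rho'-2k\tfrac{\lambda\lambda'}{\sigma^2}\big)\partial_\rho$ to exactly $-2k\tfrac{(\lambda')^2}{\sigma^2}F$. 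Hence $\Delta^\phi(F\partial_\rho)=\big[F''+(m-1)\tfrac{\sigma'}{\sigma}F'-2k\tfrac{(\lambda')^2}{\sigma^2}F\big]\partial_\rho$, while the curvature sum gives $-\sum_a R^N(F\partial_\rho,d\phi(e_a))d\phi(e_a)=2k\tfrac{\lambda\lambda''(\rho)}{\sigma^2}F\,\partial_\rho$. Adding $-\Delta^\phi\tau(\phi)$ to the curvature term, the $(\lambda')^2$ and $\lambda\lambda''$ contributions assemble into $(\lambda\lambda')'(\rho)=(\lambda')^2+\lambda\lambda''$, every spherical component cancels, and $\tau_2(\phi)=0$ reduces to the second line of \eqref{Oe190}.

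The step I expect to be the genuine obstacle is this spherical part of $\Delta^\phi(F\partial_\rho)$: one must carry the second fundamental form $(\nabla d\phi)(e_a,e_a)$ and the domain term $\nabla^M_{e_a}e_a$ (whose radial component is $-\tfrac{\sigma'}{\sigma}$ for each $a$) through the warped target connection and verify both that every term tangent to $d\varphi$ cancels and that the surviving radial coefficient is precisely the $(\lambda')^2$ needed to complete $(\lambda\lambda')'$ after the curvature term is added---this is where the eigenmap normalization and the two target-connection identities must be used together. Finally, the equivalence of \eqref{Oe190} with \eqref{BiL} is purely algebraic: for a radial function $u(r)$ the Laplace--Beltrami operator of the domain is $\Delta u=u''+(m-1)\tfrac{\sigma'}{\sigma}u'$, so $F=\Delta\rho-2k\tfrac{\lambda\lambda'(\rho)}{\sigma^2}$ and the second equation reads $\Delta F-2k\tfrac{(\lambda\lambda')'(\rho)}{\sigma^2}F=0$; substituting this $F$ and expanding $\Delta F$ yields \eqref{BiL} directly.
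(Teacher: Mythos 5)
Your proposal is correct and follows essentially the same route as the paper: compute $\tau(\phi)=F\,\partial_\rho$ from the equivariant ansatz, then show the bitension field is radial with coefficient $F''+(m-1)\tfrac{\sigma'}{\sigma}F'-2k\tfrac{(\lambda\lambda')'(\rho)}{\sigma^2}F$. The only difference is bookkeeping: the paper works in coordinates and splits $-J(F\partial_\rho)$ via the product rule into $FJ(\partial_\rho)$, $(\Delta F)\partial_\rho$ and $\nabla^\phi_{\nabla F}\partial_\rho$, while you expand the rough Laplacian of $F\partial_\rho$ directly in an orthonormal frame, so the $(\lambda')^2$ and $\lambda\lambda''$ pieces of $(\lambda\lambda')'$ appear separately before recombining --- the same computation, organized differently.
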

\begin{proof}
Take a system of local coordinate $\{r, u^i\}$ on $M^m_{\sigma}(o)$ (resp.  $\{\rho, y^b\}$ on $M^n_{\lambda}(o)$) so that $\{ u^i\}$ is a local coordinate system on $S^{m-1}$ (resp.  $\{ y^b\}$ is a local coordinate system on $S^{n-1}$). We adopt the following convention for the range of the indices: $A, B, C,D=0, 1, 2,\cdots, m-1;\;\;i, j, k, l=1, 2,\cdots, m-1;\;\; \alpha, \beta, \xi, \mu=0,1,2,\cdots, n$; and $a, b, c, d=1, 2,\cdots, n-1$. Let ${\bar g}={\rm d}r^2+\sigma^2(r)g^{S^{m-1}}$ and ${\tilde h}={\rm d}\rho^2+\lambda^2(\rho)g^{S^{n-1}}$, then we have
\begin{align}\notag
&{\bar g}_{00}={\bar g}^{00}=1,\; {\bar g}_{0i}={\bar g}^{0i}=0,\;\; {\bar g}_{ij}=\sigma^2(r)g_{ij},\;\; {\bar g}^{ij}=\sigma^{-2}g^{ij},\\\notag
&{\tilde h}_{00}={\tilde h}^{00}=1,\; {\tilde h}_{0a}={\tilde h}^{0a}=0,\;\; {\tilde h}_{ab}=\lambda^2(\rho)h_{ab},\;\;{\tilde h}^{ab}=\lambda^{-2}(\rho)h^{ab},
\end{align}
where $g_{ij}$ and $h_{ab}$ denote the components of the standard metrics of $S^{m-1}$ and $S^{n-1}$ respectively.
  By using the local expression $\phi(r, \theta)=(\rho(r), \varphi(\theta))$ or, $\phi^0(r, \theta)=\rho(r), \; \phi^b(r, \theta)=\varphi^b(\theta)$, we compute
\begin{equation}\notag
\begin{aligned}
\tau(\phi)=&{\bar g}^{AB}(\phi^0_{AB}-{\bar\Gamma}_{AB}^C\phi^0_C+{\tilde\Gamma}_{\alpha\beta}^0\phi^{\alpha}_A\phi^{\beta}_B)\partial_{\rho}+{\bar g}^{AB}(\phi^d_{AB}-{\bar\Gamma}_{AB}^C\phi^d_C+{\tilde\Gamma}_{\alpha\beta}^d\phi^{\alpha}_A\phi^{\beta}_B)\partial_{d}\\
=&(\rho''-{\bar g}^{ij}{\bar\Gamma}_{ij}^0\rho'+{\bar g}^{ij}{\tilde\Gamma}_{ab}^0\varphi^{a}_i\varphi^{b}_j)\partial_{\rho}+\sigma^{-2}g^{ij}(\varphi^d_{ij}-{\bar\Gamma}_{ij}^k\varphi^d_k+{\tilde\Gamma}_{ab}^d\varphi^{a}_i\varphi^{b}_j)\partial_{d}\\
=&\left(\rho''+(m-1)\frac{\sigma'}{\sigma}\rho'-\frac{2k\lambda\lambda'(\rho)}{\sigma^2}\right)\partial_{\rho}+\sigma^{-2}(0,\;\tau(\varphi)),
\end{aligned}
\end{equation}
where in obtaining the last equality we have used the identities of the connection coefficients of  the warped product metrics 
\begin{align}\notag
{\bar\Gamma}_{ij}^0=-\sigma\sigma'g_{ij},\;\;{\tilde\Gamma}_{ab}^0=-\lambda\lambda'h_{ab},
\end{align}
and the fact that the connection coefficients ${\bar\Gamma}_{ij}^k$ and ${\tilde\Gamma}_{ab}^d$ of the metrics $\sigma^2(r)g^{S^{m-1}}$ and $\lambda^2(\rho)g^{S^{n-1}}$  agree with those of the standard metrics on $S^{m-1}$ and $S^{n-1}$  respectively. Using the fact that an eigenmap is a harmonic map with $\tau(\varphi)=0 $, we conclude that the tension field of the equivariant map $\phi$ is given by $\tau(\phi)=F\partial_{\rho}$ for $F$ given in the first equation of (\ref{Oe190}).

To compute the bitension field, we have (cf. (7) in \cite{Ou09}) 
\begin{equation}\label{Oe192}
\begin{aligned}
\tau_2(\phi)=-J(\tau(\phi))=-J(F\partial_{\rho}))=-[FJ(\partial_{\rho})-(\Delta F)\partial_{\rho}-2\nabla^{\phi}_{\nabla F}\partial_{\rho}],
\end{aligned}
\end{equation}
where $J$ is the Jacobi operator defined by
\begin{equation}\label{JO}
J^{\phi}_{\bar g}(X)=-{\rm
Tr}_{\bar g}[(\nabla^{\phi}\nabla^{\phi}-\nabla^{\phi}_{\bar \nabla})X
-  {\rm R}^{N}({\rm d}\phi, X){\rm d}\phi]
\end{equation}
for any vector field $X$ along the map $\phi$

A further computation using the assumptions that $\tau(\varphi)=0$ and $|{\rm d}\varphi|^2=2k$  yields
\begin{equation}\notag
\begin{aligned}
\Delta F=&F''+(m-1)\frac{\sigma'}{\sigma}F',\;\;\;\nabla^{\phi}_{\nabla F}\partial_{\rho}=0,\; {\rm and}\\
-J(\partial_{\rho})=&{\bar g}^{00}[\nabla^{\phi}_{\partial_r}\nabla^{\phi}_{\partial_r}\partial_{\rho}-\nabla^{\phi}_{{\bar\nabla}_{\partial_r}{\partial_r}}\partial_{\rho}-{\rm \tilde R}({\rm d}\phi(\partial_r), \partial \rho){\rm d}\phi(\partial_r)]\\
&+{\bar g}^{ij}[\nabla^{\phi}_{\partial_i}\nabla^{\phi}_{\partial_j}\partial_{\rho}-\nabla^{\phi}_{{\bar\nabla}_{\partial_i}{\partial_j}}\partial_{\rho}-{\rm \tilde R}({\rm d}\phi(\partial_i), \partial \rho){\rm d}\phi(\partial_j)]\\
=&\frac{\lambda'}{\lambda\sigma^2}\tau(\varphi)-\frac{2e(\varphi)}{\sigma^2}(\lambda\lambda')'(\rho)\partial_{\rho}=-2k\frac{(\lambda\lambda')'(\rho)}{\sigma^2}\partial_{\rho}.
\end{aligned}
\end{equation}
Using these and (\ref{Oe192}) we have
\begin{equation}\notag
\tau_2(\phi)=\left(F''+(m-1)\frac{\sigma'}{\sigma}F'-2k\frac{(\lambda\lambda')'(\rho)}{\sigma^2}F \right)\partial_{\rho},
\end{equation}
which gives Equation (\ref{Oe190}). Equation (\ref{BiL}) follows from (\ref{Oe190}) and the fact that $\Delta \alpha=\alpha''+(m-1)\frac{\sigma'}{\sigma}\alpha'$ for a function $\alpha=\alpha(r)$ on $(\mathbb{R}^+\times S^{m-1}, {\rm d}r^2+\sigma^2(r)g^{S^{m-1}})$.
\end{proof}

It is easily seen that Proposition \ref{MT1} gives a generalization of the following 
\begin{proposition}\label{on04}{\rm \cite{BMO07a}}
For an eigenmap $\varphi: S^{m-1}\to S^{n-1}$ with eigenvalue $2e(\varphi)=2k>0$, the equivariant map $\phi:(\mathbb{R}^+\times S^{m-1}, {\rm d}r^2+r^2g^{S^{m-1}})\to (\mathbb{R}^+\times S^{n-1}, {\rm d}\rho^2+\lambda^2(\rho)g^{S^{n-1}})$ from Euclidean space with $\phi(r, \theta)=(\rho(r), \varphi(\theta))$ is a biharmonic map if and only if $\rho$ is a solution of
\begin{equation}\label{Oe193}
\begin{cases}
F=\rho''+(m-1)\frac{1}{r}\rho'-2k\,\frac{\lambda\lambda'(\rho)}{r^2},\\
F''+(m-1)\frac{1}{r}F'-2k\,\frac{(\lambda\lambda')'(\rho)}{r^2}F=0.
\end{cases}
\end{equation}
\end{proposition}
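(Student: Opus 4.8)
The plan is to obtain Proposition \ref{on04} as the special case $\sigma(r)=r$ of Proposition \ref{MT1}, which has already been established. First I would recall that Euclidean space $\mathbb{R}^m$, written in geodesic polar coordinates centered at the origin, carries the warped-product metric ${\rm d}r^2+r^2g^{S^{m-1}}$; that is, it is exactly the model $M^m_\sigma(o)$ with warping function $\sigma(r)=r$. This function satisfies the required model boundary conditions $\sigma(0)=0$, $\sigma'(0)=1$ and $\sigma(r)>0$ for $r>0$, so the framework of Proposition \ref{MT1} applies without modification. Hence the equivariant map in the present statement is precisely the equivariant map of Proposition \ref{MT1} for this choice of domain warping function.

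Next I would simply substitute $\sigma(r)=r$ into the biharmonicity system (\ref{Oe190}). Since $\sigma'(r)=1$, one has $\sigma'/\sigma=1/r$ and $\sigma^2=r^2$, so the first equation becomes $F=\rho''+(m-1)\frac{1}{r}\rho'-2k\,\frac{\lambda\lambda'(\rho)}{r^2}$ and the second becomes $F''+(m-1)\frac{1}{r}F'-2k\,\frac{(\lambda\lambda')'(\rho)}{r^2}F=0$. This is exactly the system (\ref{Oe193}). By Proposition \ref{MT1}, $\phi$ is biharmonic if and only if $\rho$ solves (\ref{Oe190}), and under the present substitution this is the same as solving (\ref{Oe193}), which is the desired equivalence.

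I expect no genuine obstacle in this argument, consistent with the remark that the generalization is \emph{easily seen}: the entire content is a clean specialization of the already-proven general proposition. The only point requiring verification is the identification of Euclidean space as the model $M^m_\sigma(o)$ with $\sigma(r)=r$, and this is already recorded in the paper's list of warped-product representations of the space forms. Should one prefer, an alternative (but redundant) route would be to repeat the tension-field and bitension-field computations of Proposition \ref{MT1} verbatim with $\sigma(r)=r$ inserted from the outset; this reproduces (\ref{Oe193}) directly but offers no additional insight, so the specialization argument is the natural and most economical proof.
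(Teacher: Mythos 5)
Your proof is correct and matches the paper's approach: the paper offers no separate proof, merely noting that Proposition \ref{MT1} ``gives a generalization'' of this statement, i.e., it is exactly the specialization $\sigma(r)=r$ that you carry out. The substitution $\sigma'/\sigma=1/r$, $\sigma^2=r^2$ turns (\ref{Oe190}) into (\ref{Oe193}) as you say, and nothing more is needed.
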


Proposition \ref{MT1}  also includes the following corollary as a special case.

\begin{corollary}\label{Ra01}{\rm (see \cite{WOY14} for the case of $m=2$ and  \cite{MOR15} for general $m$)}
The rotationally symmetric map $\phi:(\mathbb{R}^+\times S^{m-1}, {\rm d}r^2+\sigma^2(r)g^{S^{m-1}})\to (\mathbb{R}^+\times S^{m-1}, {\rm d}\rho^2+\lambda^2(\rho)g^{S^{m-1}})$  with $\phi(r, \theta)=(\rho(r), \theta)$ is a biharmonic map if and only if $\rho$ is a solution of
\begin{equation}\label{Rae00}
\begin{cases}
F=\rho''+(m-1)\frac{\sigma'}{\sigma}\rho'-(m-1)\,\frac{\lambda\lambda'(\rho)}{\sigma^2},\\
F''+(m-1)\frac{\sigma'}{\sigma}F'-(m-1)\,\frac{(\lambda\lambda')'(\rho)}{\sigma^2}F=0.
\end{cases}
\end{equation}
\end{corollary}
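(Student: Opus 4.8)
The plan is to realize the rotationally symmetric map as a particular instance of the equivariant map already treated in Proposition \ref{MT1}, so that the corollary reduces to an identification of parameters rather than a fresh computation. Concretely, I would take the source and target spheres to have equal dimension, $n=m$, and choose the eigenmap $\varphi$ to be the identity map ${\rm id}:S^{m-1}\to S^{m-1}$. With this choice the equivariant map $\phi(r,\theta)=(\rho(r),\varphi(\theta))$ becomes exactly $\phi(r,\theta)=(\rho(r),\theta)$, which is precisely the rotationally symmetric map appearing in the statement of Corollary \ref{Ra01}.

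The one point that genuinely needs checking is that the identity map qualifies as an eigenmap in the sense used by Proposition \ref{MT1}, and what its eigenvalue is. First I would recall that ${\rm id}:S^{m-1}\to S^{m-1}$ is harmonic, being an isometry, so $\tau({\rm id})=0$, and that it has constant energy density. Viewing the sphere through its standard embedding in $\mathbb{R}^m$, the components of ${\rm id}$ are the restrictions to $S^{m-1}$ of the coordinate functions $x^1,\dots,x^m$, which are the first nonconstant eigenfunctions of the sphere Laplacian; equivalently $|{\rm d}\,{\rm id}|^2=\dim S^{m-1}=m-1$, so that $2e(\varphi)=m-1$ and hence the eigenvalue parameter is $2k=m-1$.

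With this identification in hand the conclusion is immediate: substituting $2k=m-1$ into the system (\ref{Oe190}) of Proposition \ref{MT1} turns it verbatim into the system (\ref{Rae00}), both in the defining expression for $F$ and in the second-order equation for $F$. I do not anticipate any real obstacle here; the only care required is bookkeeping of the eigenvalue convention $2e(\varphi)=2k$, namely verifying that $e({\rm id})=\tfrac12(m-1)$ is matched correctly so that the coefficient $2k$ collapses to $m-1$ consistently in every term.
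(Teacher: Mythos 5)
Your proposal is correct and is exactly the paper's route: the paper states Corollary \ref{Ra01} as the special case of Proposition \ref{MT1} obtained by taking $\varphi={\rm id}:S^{m-1}\to S^{m-1}$, an eigenmap with $2e(\varphi)=2k=|{\rm d}\,{\rm id}|^2=m-1$, so that substituting $2k=m-1$ into (\ref{Oe190}) yields (\ref{Rae00}). Your verification of the eigenvalue bookkeeping supplies the only detail the paper leaves implicit.
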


\begin{example}
One can check (cf. e.g., Example 9.10 in \cite{OC20}) that the Hopf fibration $\varphi:S^3\to S^2$ is an eigenmap with $2e(\varphi)=2k=|{\rm d}\varphi|^2=8$, so the equivariant map $\phi: S^4=([0,\pi]\times S^3, {\rm d}r^2+\sin^2 r g^{S^3})\to S^3=([0,\pi]\times S^2, {\rm d}\rho^2+\sin^2 \rho g^{S^2})$ with $\phi(r,\theta)=(\rho(r), \varphi(\theta))$ is a biharmonic map if and only if $\rho=\rho(r)$ solves the equation
\begin{equation}\notag\label{Oe195}
\begin{cases}
F=\rho''+3(\cot r)\rho'-4\,\frac{\sin 2\rho}{\sin^2r},\\
F''+3(\cot r)F'-8\,\frac{\cos 2\rho}{\sin^2r}F=0.
\end{cases}
\end{equation}
\end{example}
The following classification of rotationally symmetric conformal biharmonic maps between space forms was obtained in \cite{MOR15}.
\begin{theorem}\label{Ra02}{\rm \cite{MOR15}} For $m>2$, a rotationally symmetric conformal map $\phi:(\mathbb{R}^+\times S^{m-1}, {\rm d}r^2+\sigma^2(r)g^{S^{m-1}})\to (\mathbb{R}^+\times S^{m-1}, {\rm d}\rho^2+\lambda^2(\rho)g^{S^{m-1}})$, $\phi(r, \theta)=(\rho(r), \theta)$, between model spaces of constant sectional curvature is proper biharmonic if and only if, up to a homothety of the domain or the target space, it is one of the following maps:\\
(i) The inversion in $S^3$,  $\phi:\mathbb{R}^4\setminus\{0\}\equiv(\mathbb{R}^+\times S^{3}, {\rm d}r^2+r^2g^{S^{3}})\to \mathbb{R}^4\setminus\{0\}\equiv(\mathbb{R}^+\times S^{3}, {\rm d}\rho^2+\rho^2g^{S^{3}})$  with $\phi(r, \theta)=(1/r, \theta)$, which can be described, in Cartesian coordinates, as  $\phi:\mathbb{R}^4\setminus\{0\}\to \mathbb{R}^4\setminus\{0\}$, $\phi(x)=x/|x|^2$.\\
(ii) The inverse of the stereographic projection $\phi:\mathbb{R}^4\equiv(\mathbb{R}^+\times S^{3}, {\rm d}r^2+r^2g^{S^{3}})\to S^m\setminus\{{\rm south\;pole}\}\equiv(\mathbb{R}^+\times S^{3}, {\rm d}\rho^2+\sin^2 \rho g^{S^{3}})$  with $\phi(r, \theta)=(2\tan^{-1} r, \theta)$.\\
(iii) $\phi:B^4\equiv(\mathbb{R}^+\times S^{3}, {\rm d}r^2+r^2g^{S^{3}})\to H^4\equiv(\mathbb{R}^+\times S^{3}, {\rm d}\rho^2+ \sinh^2 \rho g^{S^{3}})$  with $\phi(r, \theta)=(2 \tanh^{-1} r, \theta)$ for  $r\in [0, 1)$. So, the map is from an open unit ball  into a hyperbolic space form of constant sectional curvature $K=-1$.
\end{theorem}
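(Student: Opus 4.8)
The plan is to reduce the coupled system (\ref{Rae00}) to something tractable by exploiting conformality first and the constant-curvature structure of the warping functions second. Since $\phi(r,\theta)=(\rho(r),\theta)$ pulls the target metric back to $(\rho')^2\,dr^2+\lambda^2(\rho)\,g^{S^{m-1}}$, the map is conformal precisely when this is a pointwise multiple of $dr^2+\sigma^2(r)\,g^{S^{m-1}}$, that is, when
\[
\rho'=\varepsilon\,\frac{\lambda(\rho)}{\sigma(r)},\qquad \varepsilon=\pm1 .
\]
Substituting this relation and the resulting expression for $\rho''$ into the first line of (\ref{Rae00}) collapses the tension coefficient to the closed form
\[
F=(m-2)\,\frac{\lambda(\rho)\bigl(\varepsilon\sigma'-\lambda'(\rho)\bigr)}{\sigma^{2}} .
\]
Thus the map is harmonic exactly when $\varepsilon\sigma'=\lambda'(\rho)$, and for a \emph{proper} biharmonic map I must keep $F\not\equiv0$; the factor $(m-2)$ is also where the dimension hypothesis enters, since for $m=2$ every conformal map of this type is already harmonic.

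Next I would feed this $F$ into the second (bitension) equation of (\ref{Rae00}). The only delicate point is bookkeeping: $F$ depends on $r$ both explicitly through $\sigma$ and implicitly through $\rho(r)$, so I convert each $\rho$-derivative into an $r$-derivative by means of $\rho'=\varepsilon\lambda/\sigma$ and reduce the second derivatives of the warping functions through the Jacobi equations $\sigma''=-K_{1}\sigma$ and $\lambda''(\rho)=-K_{2}\lambda(\rho)$ of the two constant-curvature model spaces, so that in particular $(\lambda\lambda')'(\rho)=(\lambda')^{2}-K_{2}\lambda^{2}$. After clearing the common $\sigma^{-2}$ factors, the pair (\ref{Rae00}) becomes a single scalar identity in $r$ whose coefficients are built from $\sigma,\sigma',\lambda,\lambda',\varepsilon,K_{1},K_{2}$ and $m$.

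Finally I would analyze this identity. A homothety of the domain and of the target normalizes $K_{1},K_{2}\in\{0,1,-1\}$, leaving only finitely many curvature pairs, and I expect the identity to be incompatible with $F\not\equiv0$ unless the domain is flat, $K_{1}=0$, and, as the listed $S^{3}$ maps reflect, the dimension equals four; this dimensional rigidity is already transparent in the flat-to-flat case, where inserting $\rho=1/r$ forces the coefficient $8(m-2)(m-4)$ to vanish. Granting $K_{1}=0$, the conformality relation integrates to $\int d\rho/\lambda(\rho)=\varepsilon\ln r+c$, which for the three admissible targets gives $\rho=Cr^{\varepsilon}$, $\tan(\rho/2)=Cr^{\varepsilon}$, and $\tanh(\rho/2)=Cr^{\varepsilon}$ respectively; imposing the bitension identity then fixes the sign $\varepsilon$ (ruling out the harmonic branch) and, after absorbing $C$ by a further homothety, singles out exactly the inversion $\rho=1/r$, the inverse stereographic projection $\rho=2\tan^{-1}r$, and the map $\rho=2\tanh^{-1}r$ of cases (i)--(iii). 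The main obstacle is this last step together with the middle one: the substitution producing $F''$ generates a long list of chain-rule terms, and the real work is to show that each of the six non-flat curvature configurations forces $F\equiv0$, so that only harmonic maps survive there and the clean threefold list remains.
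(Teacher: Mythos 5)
The paper itself contains no proof of this statement: Theorem \ref{Ra02} is quoted directly from \cite{MOR15}, so there is no argument in the text to compare yours against step by step. Taken on its own terms, your setup is correct as far as it goes: the conformality condition $\rho'=\varepsilon\lambda(\rho)/\sigma(r)$ is the right one, the collapse of the tension coefficient to $F=(m-2)\lambda(\rho)\bigl(\varepsilon\sigma'-\lambda'(\rho)\bigr)/\sigma^{2}$ checks out, and your flat-to-flat computation is verifiably right: for $\sigma(r)=r$, $\lambda(\rho)=\rho$, $\rho=1/r$ one gets $F=-2(m-2)r^{-3}$ and the bitension equation of (\ref{Rae00}) reduces to $8(m-2)(m-4)r^{-5}=0$, which does isolate $m=4$ in that case.

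The genuine gap is the one you flag yourself at the end: the substantive content of the classification is (a) showing that the six curvature configurations with non-flat domain admit \emph{no} proper biharmonic conformal map of this type, and (b) completing the flat-to-sphere and flat-to-hyperbolic cases (solving $\int d\rho/\lambda=\varepsilon\ln r+c$, substituting into the bitension identity, and pinning down both the dimension $m=4$ and the admissible integration constants). You carry out neither; ``I expect the identity to be incompatible with $F\not\equiv0$ unless the domain is flat'' is a conjecture standing in for the bulk of the proof in \cite{MOR15}, and the bitension identity obtained after inserting $\rho'=\varepsilon\lambda/\sigma$, $\sigma''=-K_1\sigma$, $(\lambda\lambda')'(\rho)=(\lambda')^2-K_2\lambda^2$ is complicated enough that its incompatibility with $F\not\equiv0$ for $K_1\neq0$ cannot be taken on faith. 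So what you have established is that the three listed maps are the natural candidates in the flat-domain cases, not that the list is exhaustive. One strategic remark: for the part of the analysis confined to $m=4$, the route the paper uses for its own generalization --- Theorem 2.3 of \cite{BO18}, which converts biharmonicity of a conformal map from an Einstein $4$-manifold into the Yamabe-type equation $\Delta\rho'-3c\,\rho'=A\rho'^{3}$ for the conformal factor --- is a much more tractable substitute for direct manipulation of (\ref{Rae00}); but it does not address the exclusion of dimensions $m\neq4$, which your argument must still supply.
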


In dimension $4$, we have the following generalization which gives a complete classification of rotationally symmetric biharmonic conformal map from a space form into a model.

\begin{theorem}\label{O191}A rotationally symmetric map 
\begin{equation}\label{Ro4d}
\begin{aligned}
    \phi: M^4(c) \to M _{\lambda}^4 (o)=&(\mathbb{R}^+\times S^3, {\rm d}\rho^2+\lambda^2(\rho)g^{S^3}),\\
    \phi(r,\,\theta) =& (\rho(r),\,\theta)
  \end{aligned}
\end{equation}
from a $4$-dimensional space form into a model is a  proper biharmonic conformal diffeomorphism if and only the target model is a space form, and  up to a homothety, the map $\phi$ is one of the three maps given in Theorem \ref{Ra02}. 
\end{theorem}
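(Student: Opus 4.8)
The plan is to reduce the problem to the scalar ODE system of Corollary~\ref{Ra01} and then to extract the rigidity of the target from it. Since $\phi(r,\theta)=(\rho(r),\theta)$ is rotationally symmetric, the underlying eigenmap is the identity of $S^3$, for which $2k=m-1=3$; hence by Corollary~\ref{Ra01} with $m=4$ the map is biharmonic exactly when the pair (\ref{Rae00}) holds. Independently I would compute the pullback metric $\phi^{*}\tilde h=\rho'^2\,{\rm d}r^2+\lambda^2(\rho)\,g^{S^3}$ and record that $\phi$ is conformal if and only if $\rho'=\epsilon\,\lambda(\rho)/\sigma(r)$ with $\epsilon=\pm1$, so that the conformal factor is $e^{2u}=\lambda^2/\sigma^2$. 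Substituting this first-order relation into the first line of (\ref{Rae00}) collapses the tension field to $F=\tfrac{2\lambda}{\sigma^2}\bigl(\epsilon\sigma'-\lambda'(\rho)\bigr)$, which is precisely $-2\,{\rm d}\phi(\mathrm{grad}\,u)$, the expected tension field of a conformal map in dimension $4$; in particular $\phi$ is proper if and only if $\epsilon\sigma'\neq\lambda'(\rho)$.

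The heart of the argument is the second line of (\ref{Rae00}). I would insert the expression for $F$, differentiate, and simplify using the domain identities $\sigma''=-c\sigma$ and the first integral $\sigma'^2=1-c\sigma^2$ (valid because the domain is a space form of curvature $c$), writing the target radial curvature as $\tilde{K}=-\lambda''/\lambda$. A substantial cancellation reduces the fourth-order condition to a single first-order relation, and, setting $u=\lambda'^2-1$ and $Q=\tilde{K}\lambda^2$, one recognises the first integral $\lambda'^2-1-\tilde{K}\lambda^2-c\sigma^2=A\lambda^2$ for a constant $A$. For a flat domain ($c=0$) this is cleanest: one gets in addition the relation $\lambda'^2-1+\tilde{K}\lambda^2=B/\lambda^2$, and the behaviour at the pole together with smoothness of the model ($\lambda(0)=0$, $\lambda'(0)=1$, $\lambda''$ bounded) forces $B=0$, hence $\lambda'^2=1-\tilde{K}\lambda^2$; differentiating and using $\lambda''=-\tilde{K}\lambda$ yields $\tilde{K}'\lambda^2=0$, so $\tilde{K}$ is constant and the target is a space form.

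Once the target is a space form, both domain and target have constant curvature, so Theorem~\ref{Ra02} applies verbatim and identifies $\phi$, up to homothety, with one of the maps (i)--(iii); since all three have a flat domain, this simultaneously records that $c=0$. The converse is immediate, as each of (i)--(iii) is a proper biharmonic conformal diffeomorphism by Theorem~\ref{Ra02}. The step I expect to be the main obstacle is the middle one: the reduction of the second equation of (\ref{Rae00}) to a clean first integral is a long computation in which isolating the correct conserved quantity is delicate, and the curved-domain case $c\neq0$ introduces the extra term $c\sigma^2$ that couples the $r$-dependence of $\sigma$ to the $\rho$-dependence of $\lambda$. For $c\neq0$ the cleanest route is probably to use the first integral to express $\sigma^2$ as a function of $\rho$ and then test this expression against the intrinsic relation $\sigma'^2=1-c\sigma^2$ transported through the conformality relation $\rho'=\epsilon\lambda/\sigma$; this compatibility should again collapse to constancy of $\tilde{K}$ (with the only constant-curvature targets then producing improper, identity-type maps), so that Theorem~\ref{Ra02} once more finishes the classification.
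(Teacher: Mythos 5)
Your overall strategy --- reduce to the ODE system of Corollary~\ref{Ra01} with $m=4$, impose the conformality relation $\rho'=\epsilon\lambda(\rho)/\sigma(r)$, and show that the resulting constraint forces the radial curvature of the target to be constant --- is a legitimate and genuinely different route from the paper's. (Your preliminary steps check out: with $\rho'=\lambda/\sigma$ the tension field does collapse to $F=\tfrac{2\lambda}{\sigma^2}(\sigma'-\lambda')=-2\,{\rm d}\phi({\rm grad}\,u)$, as expected for a conformal map in dimension four.) The paper avoids the fourth-order equation altogether: since a space form is Einstein, it invokes Theorem 2.3 of \cite{BO18}, which says a conformal map from a four-dimensional Einstein manifold is biharmonic iff its conformal factor satisfies a \emph{second-order} Yamabe-type equation $\Delta\rho'-3c\rho'=A\rho'^3$ with the constant $A$ tied to ${\rm Scal}^N$ by $6A+6c/\rho'^2+{\rm Scal}^N=0$. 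Combining this with the explicit scalar curvature of the warped-product target gives the linear ODE $(\lambda^2)''-2A\lambda^2=2(1+c)$, which is solved in closed form; the boundary condition $\lambda(0)=0$, $\lambda'(0)=1$ then pins $\lambda$ down to $\sqrt{1+c}\,\rho$, $C\sinh$, or $C\sin$, i.e.\ constant curvature, and Theorem~\ref{Ra02} finishes.

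The problem with your version is that the decisive step is asserted rather than proved, and this is exactly the content of the theorem that goes beyond Theorem~\ref{Ra02}. You claim that substituting $F$ into the second equation of (\ref{Rae00}) produces, after ``a substantial cancellation,'' the first integral $\lambda'^2-1-\tilde K\lambda^2-c\sigma^2=A\lambda^2$, but you never derive it, and it is not obviously consistent with what the paper's (independent) computation yields: rewriting your relation using $\tilde K=-\lambda''/\lambda$ gives $\tfrac12(\lambda^2)''=1+c\sigma^2+A\lambda^2$, whereas the paper's equation has the constant $1+c$ in place of $1+c\sigma^2$. At minimum this discrepancy shows the conserved quantity has not actually been checked. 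The case $c\neq0$ is then handled only by ``this compatibility should again collapse to constancy of $\tilde K$,'' which is a conjecture, not an argument --- and it cannot be skipped, since the theorem is stated for all $c$ and the conclusion that only flat-domain maps survive must be \emph{derived}, not read off from the answer. As written, the proposal is a plausible plan whose central computation (the reduction of the fourth-order condition to a first integral, and the vanishing of the integration constant $B$) remains to be carried out and verified before it constitutes a proof.
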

\begin{proof}
Since a space form is an Einstein manifold,  we can use Theorem 2.3 in \cite{BO18} (see also Theorem 11.13 in \cite{OC20}) to conclude that the rotationally symmetric conformal map $\phi$ given in (\ref{Ro4d}) is  biharmonic if and only if its conformal factor $\rho'$ solves the equation
\begin{equation}\notag
\Delta \rho'-3c\,\rho'=A \rho'^3,
\end{equation}
where $A$ is a constant relating to the scalar curvature ${\rm Scal}^N$ of the target manifold and the sectional curvature $c$ of the space form $M^4(c)$ by
 \begin{equation}\label{Oe196}
 6A+\frac{6c}{\rho'^2}+{\rm Scal}^N=0.
 \end{equation}

A straightforward computation shows that by performing the following change of variable 
\begin{align}\notag
\begin{cases}
r=e^t,\;\hskip1.7cm {\rm for}\; c=0;\\
r=2 \tan^{-1}\,e^t,\;\;\;\; {\rm for}\; c>0;\\
r=2 \tanh^{-1}\,e^t,\;\; {\rm for}\; c<0,
\end{cases}
\end{align}
the conformality condition of the rotationally symmetric map reads
\begin{equation}\notag
\rho'(t)=\lambda (t).
\end{equation}
Substituting this into (\ref{Oe196}) we have
 \begin{equation}\label{Oe197}
 6A+\frac{6c}{\lambda^2}+{\rm Scal}^N=0.
 \end{equation}
 On the other hand, by a straightforward computation of the scalar curvature of the warped product manifold, we have
  \begin{equation}\label{Oe198}
{\rm Scal}^N= \frac{6}{\lambda^2}-6\frac{\lambda\lambda''}{\lambda^2}-6\frac{\lambda'^2}{\lambda^2}.
 \end{equation}
Combining (\ref{Oe197}) and (\ref{Oe198}) we obtain
  \begin{equation}\notag\label{Oe199}
(\lambda^2)''-2A\lambda^2= 2(1+c).
 \end{equation}
 Solving this ODE yields
 \begin{align}\notag
 \lambda^2(\rho)=
\begin{cases}
(1+c)\rho^2+C_1\rho+C_2, \hskip4cm {\rm for}\; A=0;\\
C_1e^{\sqrt{2A}\,\rho}+C_2e^{-\sqrt{2A}\,\rho}+C_3,\;\hskip1.7cm {\rm for}\; A>0;\\
C_1\cos\sqrt{2|A|}\,\rho+C_2\sin\sqrt{2|A|}\,\rho+C_3,\;\;\;\; {\rm for}\; A<0.
\end{cases}
\end{align}
One can easily check that by using the boundary condition $\lambda(0)=0$, we have exactly
 \begin{align}\notag
 \lambda(\rho)=
\begin{cases}
\sqrt{1+c}\,\rho, \hskip2.2cm {\rm for}\; A=0;\\
C\sinh(\frac{\sqrt{2A}}{2}\rho),\;\;{\rm for}\; A>0\; {\rm and\; some\;constant}\; C>0;\\
C\sin(\frac{\sqrt{2|A|}}{2}\rho),\,\;\;\;\; {\rm for}\; A<0\; {\rm and\; some\;constant}\; C>0,
\end{cases}
\end{align}
which means the target manifold has constant sectional curvature. Using this and Theorem \ref{Ra02} we obtain the theorem.
\end{proof}

Note  that without the conformality requirement on the maps, the biharmonic equations for equivariant (even rotationally symmetric) maps between general models are still very difficult to solve although they are ordinary differential equations (see \cite{WOY14} for the study of rotationally symmetric biharmonic maps in the simplest case $\varphi:(\mathbb{R}^+\times S^1, {\rm d}r^2+\sigma^2(r){\rm d}\theta^2)\to (\mathbb{R}^+\times S^{1}, {\rm d}\rho^2+\lambda^2(\rho){\rm d}\phi^2)$  with $\varphi(r, \theta)=(\rho(r), \theta)$). However, we can have complete solutions for the maps between Euclidean domains as follows.
\begin{proposition}\label{on06}{\rm \cite{BMO07a}}
For an eigenmap $\varphi: S^{m-1}\to S^{n-1}$ with eigenvalue $2e(\varphi)=2k>0$, the equivariant map $\phi:(\mathbb{R}^+\times S^{m-1}, {\rm d}r^2+r^2g^{S^{m-1}})\to (\mathbb{R}^+\times S^{n-1}, {\rm d}\rho^2+\rho^2g^{S^{n-1}})$, $\phi(r, \theta)=(\rho(r), \varphi(\theta))$ between Euclidean domains  is harmonic if and only if
\begin{equation}\label{Oe1915}
\rho(r)=c_1r^{k_1}+c_2r^{k_2} ,\;\;{\rm for}\; k_{1,2}=\frac{-(m-2)\pm\sqrt{(m-2)^2+8k}}{2},
\end{equation}
where $c_1, c_2$ are arbitrary such that $\rho$ takes values in $(0,\infty)$. The equivariant map is biharmonbic if and only if
\begin{equation}\label{Oe1910}
\rho(r)=\begin{cases}
     c_1r^3+c_2\,r\ln r+c_3\,r+c_4\,r^{-1}, \quad {\rm when}\;
    m=2\,\; {\rm and}\,\; k=\frac{1}{2}, {\rm or}
    \\   
   \frac{c_1}{2(m+2k_1)}r^{k_1+2}+\frac{c_2}{2(m+2k_2)}r^{k_2+2}
   +c_3r^{k_1}+c_4r^{k_2} ,
\quad {\rm otherwise},
  \end{cases}
\end{equation}
for some constants $c_1, c_2, c_3, c_4$ and $k_{1}, k_2$ given by (\ref{Oe1915}).
\end{proposition}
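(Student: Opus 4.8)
The plan is to specialize the equivariant biharmonic system to the Euclidean-to-Euclidean setting and then solve the resulting pair of Euler (equidimensional) ordinary differential equations explicitly. Setting $\sigma(r)=r$ and $\lambda(\rho)=\rho$ in Proposition \ref{on04} gives $\lambda\lambda'(\rho)=\rho$ and $(\lambda\lambda')'(\rho)=1$, so the biharmonic system reduces to
\begin{equation}\notag
F=\rho''+\frac{m-1}{r}\rho'-\frac{2k}{r^2}\rho,\qquad F''+\frac{m-1}{r}F'-\frac{2k}{r^2}F=0.
\end{equation}
The key observation is that both the operator governing harmonicity (through $F=0$) and the operator acting on $F$ carry the same Euler characteristic polynomial $P(p)=p^2+(m-2)p-2k$, whose roots are precisely the exponents $k_{1,2}$ of (\ref{Oe1915}); since $k>0$ the discriminant $(m-2)^2+8k$ is positive, so $k_1\neq k_2$ are always real and distinct. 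For the harmonic case I would impose $F=0$ and insert the trial solution $\rho=r^p$, which reproduces the indicial equation $P(p)=0$ and hence the general solution $\rho=c_1r^{k_1}+c_2r^{k_2}$, establishing (\ref{Oe1915}).

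For the biharmonic case I would first solve the homogeneous equation for $F$, which has the same Euler form and thus yields $F=a_1r^{k_1}+a_2r^{k_2}$, and then solve the inhomogeneous equation $\rho''+\frac{m-1}{r}\rho'-\frac{2k}{r^2}\rho=a_1r^{k_1}+a_2r^{k_2}$. Its homogeneous part contributes the harmonic solution $c_3r^{k_1}+c_4r^{k_2}$. For a particular solution I would try the ansatz $\rho_p=B_i\,r^{k_i+2}$ and use the identity $P(k_i+2)=2(m+2k_i)$, which follows at once from $P(k_i)=0$ by expanding $P(k_i+2)$; this lets me read off $B_i=\frac{a_i}{2(m+2k_i)}$ and produces the generic (``otherwise'') branch of (\ref{Oe1910}).

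The main obstacle, and the reason for the case split, is the resonance $m+2k_i=0$, at which the trial exponent $k_i+2$ collides with a root of $P$ and the ansatz $r^{k_i+2}$ fails. I would note that $m+2k_i=0$ is equivalent to $(m-2)^2+8k=4$, i.e. to $|k_1-k_2|=2$. Writing the admissible eigenvalue of an eigenmap as $2k=\ell(\ell+m-2)$ for a positive integer $\ell$, this resonance condition becomes $(m-2+2\ell)^2=4$, whose only solution with $m\geq 2$ and $\ell\geq 1$ is $m=2,\ \ell=1$, giving $k=\tfrac12$ and $\{k_1,k_2\}=\{1,-1\}$. In this single exceptional case I would replace the ansatz by $\rho_p=B\,r\ln r$; a direct substitution into the operator $\rho''+\frac1r\rho'-\frac1{r^2}\rho$ (here $2k=1$) sends $r\ln r$ to $2r^{-1}$, which fixes the coefficient of the logarithmic term and yields the first branch $c_1r^3+c_2\,r\ln r+c_3\,r+c_4\,r^{-1}$ of (\ref{Oe1910}). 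Checking that this exhausts all resonances among genuine eigenmaps, rather than the particular-solution bookkeeping, is the delicate point of the argument.
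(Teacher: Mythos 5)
The paper does not actually prove this proposition: it is quoted from \cite{BMO07a} without argument, so there is no in-paper proof to compare against. Your derivation is correct and complete: specializing $\sigma(r)=r$, $\lambda(\rho)=\rho$ in the equivariant system reduces both harmonicity and biharmonicity to Euler equations with the common indicial polynomial $P(p)=p^2+(m-2)p-2k$, the identity $P(k_i+2)=2(m+2k_i)$ correctly produces the generic particular solution, and your resonance analysis is the right way to justify the case split: since an eigenmap forces $2k=\ell(\ell+m-2)$ for an integer $\ell\ge 1$, the condition $(m-2)^2+8k=4$ collapses to $m=2$, $k=\tfrac12$, where the verified computation $\bigl(r\ln r\bigr)''+\tfrac1r\bigl(r\ln r\bigr)'-\tfrac1{r^2}\,r\ln r=2r^{-1}$ yields the logarithmic branch of (\ref{Oe1910}). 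This matches the standard argument in the cited source; the only point you leave implicit, and which is harmless, is the side condition that the constants be chosen so that $\rho$ stays positive.
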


\begin{remark}
Note that for $m=2$, beside the solutions given in the first family of (\ref{Oe1910}), the second family also provides many solutions. For example,  for the eigenmap $\varphi: S^1\to S^1$, $\varphi(z)=z^2,$ with eigenvalue $-2k=-4$, we have biharmonic map $\phi:\mathbb{R}^2\setminus\{0\}\to \mathbb{R}^2\setminus\{0\}$ given by $\phi(r, \theta)=(\rho(r), 2\theta)$ with $\rho(r)=\frac{c_1}{20}r^6-\frac{c_2}{12}r^{-2}+c_3r^4+c_4r^{-4}$.
\end{remark}

Note that for biharmonic functions on a star-shaped Euclidean domain, we have the well-known Almansi property (\cite{Al99}) which states that  any real-valued biharmonic function $u:\mathbb{R}^m\supset \Omega\to \mathbb{R}$ on a star-shaped domain centered at the origin can be expressed as
\begin{align}\notag\label{khe}
u(x)= h_1(x)+|x|^{2} h_2(x), \;\forall\; x\in \Omega,
\end{align}
where $h_i:\mathbb{R}^m\supset \Omega\to \mathbb{R}$ are harmonic functions.

One can easily check (see also \cite{OC20}, Corollary 10.3)  that this Amansi property generalizes to biharmonic maps from a star-shaped Euclidean domain into another Euclidean space. The following corollary shows that  when the maps are equivariant, the condition of the domain being star-shaped can be dropped.
\begin{corollary}\label{O191}
For $m> 2$, an equivariant biharmonic map  $\phi:\mathbb{R}^m\setminus\{0\}\to \mathbb{R}^n\setminus\{0\}$ given by $\phi(r, \theta)=(\rho(r), \varphi(\theta))$ has Almansi property, i.e., 
\begin{equation}
\phi(r, \theta)=r^2\phi_1(r, \theta)+\phi_2(r, \theta),
\end{equation}
 where $\phi_1, \phi_2:\mathbb{R}^m\setminus\{0\}\to \mathbb{R}^n\setminus\{0\}$ are two equivariant harmonic maps defined by the same eigenmap $\varphi:S^{m-1}\to S^{n-1}$.
\end{corollary}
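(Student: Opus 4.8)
The plan is to obtain the result by simply reading off the closed-form radial profile of $\phi$ from Proposition \ref{on06} and splitting it by an elementary factorisation; no new analysis is needed. Since $m>2$, the exceptional case $m=2,\ k=\tfrac12$ of (\ref{Oe1910}) cannot occur, so $\rho$ must be given by the second (``otherwise'') branch,
\[
\rho(r)=\frac{c_1}{2(m+2k_1)}\,r^{k_1+2}+\frac{c_2}{2(m+2k_2)}\,r^{k_2+2}+c_3\,r^{k_1}+c_4\,r^{k_2},
\]
where $k_1,k_2$ are the two exponents of (\ref{Oe1915}).

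The key observation I would use is that the first two terms differ from the last two by exactly a factor $r^2$, and that all four terms are built from the \emph{same} powers $r^{k_1}$ and $r^{k_2}$. Setting
\[
\rho_1(r)=\frac{c_1}{2(m+2k_1)}\,r^{k_1}+\frac{c_2}{2(m+2k_2)}\,r^{k_2},
\qquad
\rho_2(r)=c_3\,r^{k_1}+c_4\,r^{k_2},
\]
gives $\rho(r)=r^2\rho_1(r)+\rho_2(r)$. By the harmonicity criterion (\ref{Oe1915}) of Proposition \ref{on06}, any combination of $r^{k_1}$ and $r^{k_2}$ is precisely the radial profile of an equivariant harmonic map, so $\rho_1$ and $\rho_2$ are such profiles; hence $\phi_i(r,\theta)=(\rho_i(r),\varphi(\theta))$, $i=1,2$, built from the \emph{same} eigenmap $\varphi$, are equivariant harmonic maps.

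To conclude, I would pass to Cartesian form, writing an equivariant map as $\phi(x)=\rho(|x|)\,\varphi(x/|x|)$ and likewise for $\phi_1,\phi_2$. Then $\rho(r)=r^2\rho_1(r)+\rho_2(r)$ translates directly into $\phi(x)=|x|^2\phi_1(x)+\phi_2(x)$, which is exactly the asserted Almansi decomposition.

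The only genuinely delicate point — and the step I would expect to require a word of justification — is that $\phi_1$ and $\phi_2$ really take values in the punctured space $\mathbb{R}^n\setminus\{0\}$, i.e.\ that $\rho_1,\rho_2$ do not vanish on $(0,\infty)$; the splitting is forced by the four coefficients, and positivity of $\rho$ alone does not pass to $\rho_1$ and $\rho_2$, so this has to be checked separately (typically via sign conditions on the $c_i$). It is also worth remarking that for $m>2$ and an actual eigenmap the resonance $m+2k_i=0$ does not arise, so the denominators above are well defined and the ``otherwise'' branch genuinely applies.
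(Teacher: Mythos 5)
Your proof is correct and takes essentially the same route as the paper's: read off the general radial profile from Proposition \ref{on06} (the $m>2$ hypothesis excluding the logarithmic branch), split it as $\rho(r)=r^2\rho_1(r)+\rho_2(r)$ with $\rho_1,\rho_2$ linear combinations of $r^{k_1}$ and $r^{k_2}$, and invoke the harmonicity criterion (\ref{Oe1915}) for $\phi_1,\phi_2$. The delicate point you flag --- that positivity of $\rho$ does not automatically pass to $\rho_1$ and $\rho_2$ --- is genuine but is passed over silently in the paper's proof as well.
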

\begin{proof}
By Proposition \ref{on06}, for $m>2$, any equivariant biharmonic map $\phi: \mathbb{R}^m\setminus\{0\}\to \mathbb{R}^n\setminus\{0\}$ can be described, using geodesic polar coordinates, as $\phi(r, \theta)=(\rho(r), \varphi(\theta))$ for some eigenmap $\varphi$ and $\rho$ given by
\begin{align}\notag
\rho(r)= r^2(c_1r^{k_1}+c_2r^{k_2}) +c_3r^{k_1}+c_4r^{k_2} =r^2\rho_1(r)+\rho_2(r),
\end{align}
where $ \rho_1(r)=c_1r^{k_1}+c_2r^{k_2}$ and $\rho_2(r)=c_3r^{k_1}+c_4r^{k_2}$. It also follows from Proposition \ref{on06} that the equivariant maps $\phi_1, \phi_2:\mathbb{R}^m\setminus\{0\}\to \mathbb{R}^n\setminus\{0\}$ with $\phi_1(r, \theta)=(\rho_1(r),\varphi(\theta))$ and $\phi_2(r, \theta)=(\rho_2(r),\varphi(\theta))$ are harmonic maps. Now a straightforward computation shows that $\phi(r, \theta)=r^2\phi_1(r, \theta)+\phi_2(r, \theta)$, from which the corollary follows.
\end{proof}
\begin{remark}
We would like to point out  that $\mathbb{R}^m\setminus\{0\}$ is not a star-shaped region, but the Almansi property still holds for an equivariant  biharmonic map due to the symmetry of the map and a large enough domain ($m\ge 3$); Note also that for $m=2$,  we have equivariant biharmonic maps $\phi:\mathbb{R}^2\setminus\{0\}\to \mathbb{R}^2\setminus\{0\}$ with $\phi(r, \theta)=(\rho(r), \theta)$ with $\rho(r)=r\ln r$, which does not have the Almansi property.
\end{remark}

Note that we can use Proposition \ref{on06}  with $m=4$ and a boundary condition to have the following corollary which recovers Theorem 4.12 in \cite{Ba08} obtained in a different way.
\begin{corollary}\label{O191}{\rm\cite{Ba08}}
A rotationally symmetric biharmonic map $\phi:B_R^4(0)\to \mathbb{R}^4$, $\phi(r, \theta)=(\rho(r), \theta)$ with $\rho(0)=0$ from an open  ball of radious $R$ is the restriction of  the map $\phi(x)=C_1 x+C_2|x|^2x$. If a pole at the origin is allowed, then, the map $\phi(x)=C_1x+C_2|x|^2x+C_3\frac{x}{|x|^2}+C_4\frac{x}{|x|^4}$ is a biharmonic map. In particular, the inversion in $3$-sphere $\phi(x)=\frac{x}{|x|^2}$ is a conformal proper biharmonic map which is also a biharmonic morphism.
\end{corollary}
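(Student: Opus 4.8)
The plan is to read the result off from Proposition~\ref{on06} by specializing to the rotationally symmetric situation, in which the eigenmap is the identity $\varphi=\mathrm{id}:S^{3}\to S^{3}$. For the identity map the components are the degree-one spherical harmonics, so $2k=|{\rm d}\varphi|^{2}=m-1=3$ (in agreement with Corollary~\ref{Ra01}), and the data of Proposition~\ref{on06} are fixed by $m=4$. First I would substitute these values into the exponents (\ref{Oe1915}),
\[
k_{1,2}=\frac{-(m-2)\pm\sqrt{(m-2)^{2}+8k}}{2}=\frac{-2\pm\sqrt{16}}{2},
\]
obtaining $k_{1}=1$ and $k_{2}=-3$. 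Since $m=4\neq 2$ we are in the second (``otherwise'') branch of (\ref{Oe1910}); evaluating the coefficients through $m+2k_{1}=6$ and $m+2k_{2}=-2$ and relabelling the four free constants, the biharmonic profile becomes
\[
\rho(r)=a_{1}r^{3}+a_{2}r+a_{3}r^{-1}+a_{4}r^{-3}.
\]

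Next I would pass to Cartesian coordinates. Because $\phi(r,\theta)=(\rho(r),\theta)$ keeps the direction $\theta=x/|x|$ and only rescales the radius, the map is $\phi(x)=\frac{\rho(|x|)}{|x|}\,x$; substituting the profile above and setting $r=|x|$ gives
\[
\phi(x)=a_{2}\,x+a_{1}|x|^{2}x+a_{3}\frac{x}{|x|^{2}}+a_{4}\frac{x}{|x|^{4}}.
\]
After renaming $a_{2},a_{1},a_{3},a_{4}$ as $C_{1},C_{2},C_{3},C_{4}$ this is exactly the four-parameter family asserted in the ``pole allowed'' case.

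To recover the first statement I would impose the hypotheses that $\phi$ is defined on the full ball $B_{R}^{4}(0)$ and that $\rho(0)=0$. Regularity at the origin forces the singular coefficients $a_{3}$ and $a_{4}$ to vanish, after which $\rho(r)=a_{1}r^{3}+a_{2}r$ automatically satisfies $\rho(0)=0$ and the map reduces to the smooth polynomial $\phi(x)=C_{1}x+C_{2}|x|^{2}x$. For the last assertion I would select $a_{3}=1$ and all other constants zero, i.e.\ $\rho(r)=1/r$, which is the inversion $\phi(x)=x/|x|^{2}$; its conformality and proper biharmonicity are precisely case~(i) of Theorem~\ref{Ra02}.

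The only step not handed to me by the preceding results is that the inversion is moreover a \emph{biharmonic morphism}, and I expect this to be the main obstacle. Here I would invoke the characterization of biharmonic morphisms as horizontally conformal maps whose dilation satisfies the appropriate curvature-balance condition (equivalently, cite \cite{Ba08}), under which the inversion in $S^{3}$ qualifies; everything else is a direct specialization of Proposition~\ref{on06} together with a routine polar-to-Cartesian change of coordinates and the elimination of singular terms.
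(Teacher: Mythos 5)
Your proposal is correct and follows essentially the same route as the paper: specialize Proposition~\ref{on06} to $m=4$, $2k=3$ to get $k_{1,2}\in\{1,-3\}$ and the profile $\rho(r)=C_1r+C_2r^3+C_3r^{-1}+C_4r^{-3}$, rewrite in Cartesian coordinates, and impose $\rho(0)=0$ to kill the singular terms. You are in fact slightly more careful than the paper, which asserts the biharmonic morphism property of the inversion without comment, whereas you correctly flag that this last claim needs the separate characterization from \cite{Ba08}.
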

\begin{proof}
Using Proposition \ref{on06}  with $m=4$ and $2k=3$ we have $k_1=-3, k_2=1$ from which we obtain the general solution $\rho(r)=C_1r+C_2r^3+C_3r^{-1}+C_4r^{-3}$. It is easy to check that with this  solution of $\rho(r)$ the rotationally symmetric map $\phi(r, \theta)=(\rho(r), \theta)$ corresponds to the map $\phi(x)=C_1x+C_2|x|^2x+C_3\frac{x}{|x|^2}+C_4\frac{x}{|x|^4}$ in Cartesian coordinates. Now, if we add the boundary condition $\rho(0)=0$, then we have the solution $\rho(r)=C_1r+C_2r^3$ which corresponds to the map $\phi(x)=C_1 x+C_2|x|^2x$.
\end{proof}

\section{Second variation formula and a classification of stable proper biharmonic maps into a positively curved space form}
The following second variation formula for biharmonic maps was derived by Jiang.
\begin{theorem} \cite{Jiang86}
For a biharmonic map $\phi:(M^m,g)\to (N^n,h)$ from a compact Riemannian manifold, and a variation $\phi_t$ of $\phi$ with variation vector field $V$, we have the following second variation formula
\begin{equation}\label{2Vformula}
\begin{aligned}
\frac{{\rm d}^2 }{{\rm d} t^2}E_2(\phi_t)|_{t=0}=&\int_M [|J^{\phi}(V)|^2+ {\rm R}^N(V, \tau(\phi), V,\tau(\phi) ] {\rm d} v_g\\
&\hskip-.5in -\sum_{i=1}^m\int_M \langle V, (\nabla^{N}_{{\rm d}\phi(e_i)}{\rm R}^N)({\rm d}\phi(e_i), \tau(\phi))V \\
&\hskip-.5in + (\nabla^{N}_{\tau(\phi)}{\rm R}^N)({\rm d}\phi(e_i), V){\rm d}\phi(e_i)\\
&\hskip-.5in +2{\rm R}^N({\rm d}\phi(e_i), V)\nabla^{\phi}_{e_i}\tau(\phi)+2{\rm R}^N({\rm d}\phi(e_i), \tau(\phi))\nabla^{\phi}_{e_i}V \rangle {\rm d} v_g,
\end{aligned}
\end{equation}
where $J^{\phi}$ is the Jacobi operator of $\phi$ defined by (\ref{JO}) and $\{e_i\}$ is an orthonormal frame on $M$.
\end{theorem}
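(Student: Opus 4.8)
The plan is to compute the Hessian of the bienergy $E_2(\phi)=\frac12\int_M|\tau(\phi)|^2\,{\rm d}v_g$ directly from a two-parameter variation and to isolate the terms that survive at a biharmonic map. Writing $\Phi(x,t)=\phi_t(x)$ and setting $V={\rm d}\Phi(\partial_t)$ (so $V|_{t=0}$ is the variation vector field), I would differentiate once to get $\frac{{\rm d}}{{\rm d}t}E_2(\phi_t)=\int_M\langle\nabla^{\phi}_{\partial_t}\tau(\phi_t),\tau(\phi_t)\rangle\,{\rm d}v_g$, and then again to obtain
\begin{equation}\notag
\frac{{\rm d}^2}{{\rm d}t^2}E_2(\phi_t)=\int_M|\nabla^{\phi}_{\partial_t}\tau(\phi_t)|^2\,{\rm d}v_g+\int_M\langle\nabla^{\phi}_{\partial_t}\nabla^{\phi}_{\partial_t}\tau(\phi_t),\tau(\phi_t)\rangle\,{\rm d}v_g .
\end{equation}
The first integral is settled by the first-variation-of-tension-field identity $\nabla^{\phi}_{\partial_t}\tau(\phi_t)|_{t=0}=-J^{\phi}(V)$, which follows by comparing the general formula $\nabla^{\phi}_{\partial_t}\tau(\phi_t)={\rm Tr}_g\,\nabla^2 V+{\rm Tr}_g\,{\rm R}^N(V,{\rm d}\phi_t){\rm d}\phi_t$ (here ${\rm Tr}_g\,\nabla^2$ is the rough Laplacian along $\Phi$) with the definition (\ref{JO}) of $J^{\phi}$; it produces the term $\int_M|J^{\phi}(V)|^2\,{\rm d}v_g$.

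The substance of the argument is the second integrand, evaluated at $t=0$. I would compute $\nabla^{\phi}_{\partial_t}\nabla^{\phi}_{\partial_t}\tau$ by applying $\nabla^{\phi}_{\partial_t}$ once more to ${\rm Tr}_g\nabla^2 V+{\rm Tr}_g{\rm R}^N(V,{\rm d}\phi){\rm d}\phi$, using three standard tools: the pullback-curvature commutation rule $[\nabla^{\phi}_{\partial_t},\nabla^{\phi}_{e_i}]W={\rm R}^N({\rm d}\Phi(\partial_t),{\rm d}\Phi(e_i))W={\rm R}^N(V,{\rm d}\phi(e_i))W$ for a section $W$ along $\Phi$; the torsion-free symmetry $\nabla^{\phi}_{\partial_t}{\rm d}\phi(e_i)=\nabla^{\phi}_{e_i}V$; and the rule for differentiating the ambient curvature along the map, $\nabla^{\phi}_{\partial_t}[{\rm R}^N(\cdot,\cdot)\cdot]=(\nabla^{N}_{V}{\rm R}^N)(\cdot,\cdot)\cdot+{\rm R}^N(\nabla^{\phi}_{\partial_t}\,\cdot,\cdot)\cdot+\cdots$. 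Commuting $\nabla^{\phi}_{\partial_t}$ past the two domain derivatives in ${\rm Tr}_g\nabla^2 V$ produces pullback-curvature factors, while the last rule produces $\nabla^N{\rm R}^N$ contributions in the direction $V$; then, pairing with $\tau(\phi)$, integrating by parts on the compact $M$, and recasting the $\nabla^N{\rm R}^N$ contributions by the second Bianchi identity, I expect to recover exactly the remaining terms of (\ref{2Vformula}): the curvature scalar ${\rm R}^N(V,\tau,V,\tau)$, the two terms $(\nabla^{N}_{{\rm d}\phi(e_i)}{\rm R}^N)({\rm d}\phi(e_i),\tau)V$ and $(\nabla^{N}_{\tau}{\rm R}^N)({\rm d}\phi(e_i),V){\rm d}\phi(e_i)$, and the two couplings $2{\rm R}^N({\rm d}\phi(e_i),V)\nabla^{\phi}_{e_i}\tau$ and $2{\rm R}^N({\rm d}\phi(e_i),\tau)\nabla^{\phi}_{e_i}V$.

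Biharmonicity enters exactly once. Among the terms produced above, the contribution of the ``acceleration'' $\nabla^{\phi}_{\partial_t}V$ (equivalently $\nabla^{\phi}_{\partial_t}{\rm d}\Phi(\partial_t)$) assembles, by the same computation as in the first integral, into $-J^{\phi}(\nabla^{\phi}_{\partial_t}V)$; pairing it with $\tau(\phi)$ and using the self-adjointness of $J^{\phi}$ gives $-\int_M\langle\nabla^{\phi}_{\partial_t}V,\,J^{\phi}(\tau(\phi))\rangle\,{\rm d}v_g$, which vanishes because $J^{\phi}(\tau(\phi))=-\tau_2(\phi)=0$ for a biharmonic map. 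This both eliminates the one term absent from (\ref{2Vformula}) and confirms that the Hessian depends only on $V|_{t=0}$. I expect the main obstacle to be precisely the bookkeeping in the second paragraph: organizing the many curvature terms generated by commuting $\nabla^{\phi}_{\partial_t}$ with the domain covariant derivatives and with ${\rm R}^N$, combining them through the symmetries and the second Bianchi identity of the ambient curvature, and tracking all signs consistently with the convention (\ref{JO}) so that the integration by parts and the self-adjointness of $J^{\phi}$ are applied correctly.
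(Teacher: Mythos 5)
The paper does not prove this statement: it is quoted verbatim from Jiang \cite{Jiang86} and used as a black box, so there is no in-paper argument to compare against. Your outline reproduces the standard (indeed Jiang's own) derivation, and the skeleton is sound: the split $\frac{{\rm d}^2}{{\rm d}t^2}E_2=\int_M|\nabla^{\phi}_{\partial_t}\tau|^2+\int_M\langle\nabla^{\phi}_{\partial_t}\nabla^{\phi}_{\partial_t}\tau,\tau\rangle$ is correct; the identification $\nabla^{\phi}_{\partial_t}\tau(\phi_t)|_{t=0}=-J^{\phi}(V)$ is consistent with the sign convention of (\ref{JO}); and you correctly isolate the single place where biharmonicity enters, namely that the acceleration term contributes $-\int_M\langle\nabla^{\phi}_{\partial_t}V,\,J^{\phi}(\tau(\phi))\rangle\,{\rm d}v_g=0$ since $J^{\phi}(\tau(\phi))=-\tau_2(\phi)=0$, which is also what makes the Hessian well defined on $V|_{t=0}$ alone.

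The one substantive shortfall is that the heart of the theorem --- the expansion of $\langle\nabla^{\phi}_{\partial_t}\nabla^{\phi}_{\partial_t}\tau,\tau\rangle$ into precisely the five curvature terms of (\ref{2Vformula}) --- is asserted (``I expect to recover exactly\ldots'') rather than carried out. This is where all the work and all the risk lives: the commutators $[\nabla^{\phi}_{\partial_t},\nabla^{\phi}_{e_i}]$ generate ${\rm R}^N(V,{\rm d}\phi(e_i))$ factors, the differentiation of ${\rm R}^N$ along the map generates $(\nabla^{N}_{V}{\rm R}^N)$ terms that must be converted, via integration by parts and the second Bianchi identity, into the directions ${\rm d}\phi(e_i)$ and $\tau(\phi)$ appearing in the final formula, and the factors of $2$ on the last two terms must come out right. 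Until that bookkeeping is executed, what you have is a correct and complete proof strategy, not a proof; there is no wrong step, but the conclusion is not yet established from what is written.
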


First, we give the following form of the second variation formula for biharmonic maps, which is very convenient to use and will be used to prove a classification theorem for stable biharmonic maps form a compact manifold into a space form of positive sectional curvature.

\begin{corollary} 
For a biharmonic map $\phi:(M^m,g)\to (N^n,h)$ from a compact Riemannian manifold into a space form of positive curvature  $c$, and a variation $\phi_t$ of $\phi$ with variation vector field $V$, we have the following second variation formula
\begin{equation}\label{2VfO}
\begin{aligned}
\frac{{\rm d}^2 }{{\rm d} t^2}E_2(\phi_t)|_{t=0}=&\int_M \big\{|J^{\phi}(V)|^2-c\big[|V|^2|\tau(\phi)|^2+\langle V, \tau(\phi)\rangle^2\\
&-2|V|^2\,{\rm div}\,\langle {\rm d}\phi, \tau(\phi)\rangle-2\langle V, \tau(\phi)\rangle\, {\rm Tr}\,\langle {\rm d} \phi, \nabla^{\phi}V\rangle\\
&+2\langle V,{\rm d}\phi({\rm grad} \langle V, \tau(\phi)\rangle) \rangle\big]\big\}{\rm d}v_g.
\end{aligned}
\end{equation}
\end{corollary}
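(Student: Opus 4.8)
The plan is to specialize Jiang's formula (\ref{2Vformula}) to a target of constant sectional curvature $c$, exploiting the two defining features of a space form: its curvature operator is algebraically determined,
\[
R^N(X,Y)Z = c\left(\langle Y,Z\rangle X - \langle X,Z\rangle Y\right),
\]
and it is locally symmetric, so $\nabla^N R^N \equiv 0$. The second property makes the two terms $(\nabla^N_{{\rm d}\phi(e_i)}R^N)({\rm d}\phi(e_i),\tau(\phi))V$ and $(\nabla^N_{\tau(\phi)}R^N)({\rm d}\phi(e_i),V){\rm d}\phi(e_i)$ in (\ref{2Vformula}) vanish identically. Thus only the algebraic curvature contributions remain: the sectional-type term $R^N(V,\tau(\phi),V,\tau(\phi))$, and the two operator terms $2R^N({\rm d}\phi(e_i),V)\nabla^\phi_{e_i}\tau(\phi)$ and $2R^N({\rm d}\phi(e_i),\tau(\phi))\nabla^\phi_{e_i}V$ paired with $V$.

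First I would substitute the displayed expression for $R^N$ into each surviving term and expand using only inner products. The term $R^N(V,\tau(\phi),V,\tau(\phi))$ becomes the sectional quantity $c\left(|V|^2|\tau(\phi)|^2 - \langle V,\tau(\phi)\rangle^2\right)$. Pairing $2R^N({\rm d}\phi(e_i),\tau(\phi))\nabla^\phi_{e_i}V$ with $V$ and summing over $i$ produces a multiple of $\langle V,\tau(\phi)\rangle\sum_i\langle {\rm d}\phi(e_i),\nabla^\phi_{e_i}V\rangle = \langle V,\tau(\phi)\rangle\,{\rm Tr}\langle {\rm d}\phi,\nabla^\phi V\rangle$ together with a residual sum $\sum_i\langle\tau(\phi),\nabla^\phi_{e_i}V\rangle\langle V,{\rm d}\phi(e_i)\rangle$; pairing $2R^N({\rm d}\phi(e_i),V)\nabla^\phi_{e_i}\tau(\phi)$ with $V$ produces a multiple of $|V|^2\sum_i\langle {\rm d}\phi(e_i),\nabla^\phi_{e_i}\tau(\phi)\rangle$ plus a residual sum $\sum_i\langle V,\nabla^\phi_{e_i}\tau(\phi)\rangle\langle V,{\rm d}\phi(e_i)\rangle$.

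The two identities that convert these raw sums into the intrinsic quantities of (\ref{2VfO}) are as follows. Computing the divergence of the one-form $X\mapsto\langle {\rm d}\phi(X),\tau(\phi)\rangle$ at a point with $\nabla_{e_i}e_j=0$, and using $\sum_i\nabla^\phi_{e_i}{\rm d}\phi(e_i)=\tau(\phi)$, gives
\[
\sum_i\langle {\rm d}\phi(e_i),\nabla^\phi_{e_i}\tau(\phi)\rangle = {\rm div}\langle {\rm d}\phi,\tau(\phi)\rangle - |\tau(\phi)|^2,
\]
which simultaneously produces the divergence term and an extra $|V|^2|\tau(\phi)|^2$ contribution. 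The Leibniz rule $e_i\langle V,\tau(\phi)\rangle = \langle\nabla^\phi_{e_i}V,\tau(\phi)\rangle + \langle V,\nabla^\phi_{e_i}\tau(\phi)\rangle$ then lets me merge the two residual sums into
\[
\sum_i\left(e_i\langle V,\tau(\phi)\rangle\right)\langle V,{\rm d}\phi(e_i)\rangle = \langle V,{\rm d}\phi({\rm grad}\langle V,\tau(\phi)\rangle)\rangle,
\]
upon recognizing $\sum_i\left(e_i\langle V,\tau(\phi)\rangle\right){\rm d}\phi(e_i) = {\rm d}\phi({\rm grad}\langle V,\tau(\phi)\rangle)$. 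Collecting the $|J^\phi(V)|^2$ term together with all of these contributions yields (\ref{2VfO}).

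I expect the main obstacle to be entirely one of sign and convention bookkeeping rather than analysis. In particular one must keep the sign convention for the $(0,4)$-tensor $R^N(V,\tau(\phi),V,\tau(\phi))$ consistent with that of the operator $R^N(\cdot,\cdot)\cdot$ appearing in the remaining terms, and must correctly track that the final coefficient of $|V|^2|\tau(\phi)|^2$ is assembled from two distinct sources, namely the sectional-type term and the $-|\tau(\phi)|^2$ piece split off by the divergence identity above. Compactness of $M$ enters only to guarantee that the displayed integrals are finite and that no boundary terms survive when the divergence is integrated; the substance of the proof is the pointwise algebraic reorganization.
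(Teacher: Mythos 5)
Your proposal is correct and follows essentially the same route as the paper: kill the $\nabla^N R^N$ terms by local symmetry, expand the remaining curvature terms with $R^N(X,Y)Z=c(\langle Y,Z\rangle X-\langle X,Z\rangle Y)$, merge the two residual sums via the Leibniz rule into $\langle V,{\rm d}\phi({\rm grad}\langle V,\tau(\phi)\rangle)\rangle$, and apply the identity $\sum_i\langle {\rm d}\phi(e_i),\nabla^\phi_{e_i}\tau(\phi)\rangle={\rm div}\langle {\rm d}\phi,\tau(\phi)\rangle-|\tau(\phi)|^2$, which is exactly the paper's Equation (\ref{je8629}). No substantive differences.
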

\begin{proof}
By using (\ref{2Vformula}) and $\nabla^N{\rm R}^N=0$ for a space form, we have 
\begin{equation}\label{2Vfo1}
\begin{aligned}
&\frac{{\rm d}^2 }{{\rm d} t^2}E_2(\phi_t)|_{t=0}=\int_M [|J^{\phi}(V)|^2+ {\rm R}^N(V, \tau(\phi), V,\tau(\phi) ] {\rm d} v_g\\
&\hskip.2in -2\int_M \Big\{\sum_{i=1}^m\langle V, {\rm R}^N({\rm d}\phi(e_i), V)\nabla^{\phi}_{e_i}\tau(\phi)\\&\hskip.3in  +{\rm R}^N({\rm d}\phi(e_i), \tau(\phi))\nabla^{\phi}_{e_i}V\rangle \Big\} {\rm d}v_g.
\end{aligned}
\end{equation}
By using the curvature property of a space form and  further computations, we obtain
\begin{equation}\notag 
{\rm R}^N(V, \tau(\phi), V,\tau(\phi)=c(|V|^2|\tau(\phi)|^2-\langle V, \tau(\phi)\rangle^2)
\end{equation}
and 
\begin{equation}\notag 
\begin{aligned}
&\sum_{i=1}^m\langle V, {\rm R}^N({\rm d}\phi(e_i), V)\nabla^{\phi}_{e_i}\tau(\phi)+{\rm R}^N({\rm d}\phi(e_i), \tau(\phi))\nabla^{\phi}_{e_i}V\rangle\\
=&\; c\sum_{i=1}^m\big(\langle V, {\rm d}\phi(e_i)\rangle(e_i\langle V, \tau(\phi)\rangle)-|V|^2\langle {\rm d}\phi(e_i), \nabla^{\phi}_{e_i}\tau(\phi) \rangle\\
&-\langle V, \tau(\phi)\rangle\langle {\rm d}\phi(e_i), \nabla^{\phi}_{e_i}V \rangle \big)\\
=&\; c\big[ \langle V, {\rm d}\phi({\rm grad} \langle V,\tau(\phi)\rangle)\rangle-|V|^2\,({\rm div}\langle {\rm d}\phi, \tau(\phi)\rangle-|\tau(\phi)|^2)\\
&-\langle V, \tau(\phi)\rangle\,{\rm Tr}\langle {\rm d}\phi, \nabla^{\phi}V\rangle\big],
\end{aligned}
\end{equation}
where in obtaining the last equality we have used the identity
\begin{align}\label{je8629}
\sum_{i=1}^m\langle {\rm d}\phi(e_i), \nabla^\phi_{e_i}\tau(\phi)\rangle={\rm div}\langle {\rm d}\phi, \tau(\phi)\rangle-|\tau(\phi)|^2
\end{align}
which can be checked by a straightforward computation (see also Equation (12.6) in \cite{OC20}).

Substituting these into (\ref{2Vfo1}) we obtain the corollary.
\end{proof}

Recall that a biharmonic map $\phi:(M^m,g)\to (N^n,h)$ from a compact manifold is said to be a {\bf  stable } if the second variation of bienergy is nonnegative for every variation $\phi_t$ of $\phi$. This is equivalent to the integral in (\ref{2Vformula}) is nonnegative for any vector field $V$ along the map $\phi$.

It is clear from the definition and the second variational formula (\ref{2Vformula}) that a harmonic map from a compact manifold as a trivial biharmonic map is stable since $ \frac{{\rm d}^2 }{{\rm d} t^2}E_2(\phi_t)|_{t=0}=\int_M |J^{\phi}(V)|^2 {\rm d} v_g\ge0$ for any vector field $V$ along $\phi$.

Th following classifications of stable biharmonic maps were  proved in \cite{Jiang86}.
\begin{theorem} \cite{Jiang86}\label{JiangStable}
If a stable biharmonic map $\phi:(M^m,g)\to (N^n,h)$ from a compact Riemannian manifold into a space form of positive sectional curvature $c>0$ satisfies the first conservation law, i.e., ${\rm div}S(\phi)=0$, then it is harmonic map.
\end{theorem}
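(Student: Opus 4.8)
The plan is to exploit the improved second variation formula (\ref{2VfO}) by feeding it a single, cleverly chosen variation field and reading off a sign. Since stability asserts that $\frac{{\rm d}^2}{{\rm d}t^2}E_2(\phi_t)|_{t=0}\ge 0$ for \emph{every} $V$, it suffices to exhibit one admissible $V$ for which the right-hand side of (\ref{2VfO}) is manifestly nonpositive, and then force the inequality to be an equality. The natural candidate is the tension field itself, $V=\tau(\phi)$; this choice is tailored to collapse the most delicate terms in (\ref{2VfO}).

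First I would dispose of the Jacobi term. Because $\phi$ is biharmonic, $\tau_2(\phi)=-J^\phi(\tau(\phi))=0$, so with $V=\tau(\phi)$ the leading term $|J^\phi(V)|^2$ vanishes identically. Next I would bring in the conservation law. Writing ${\rm div}\,S(\phi)=0$ out as the vanishing of the one-form $X\mapsto\langle {\rm d}\phi(X),\tau(\phi)\rangle$, I obtain three simplifications at once: the term $-2|V|^2\,{\rm div}\langle {\rm d}\phi,\tau(\phi)\rangle$ drops because this one-form is identically zero; the term $2\langle V,{\rm d}\phi({\rm grad}\langle V,\tau(\phi)\rangle)\rangle$ drops because $\langle {\rm d}\phi(X),\tau(\phi)\rangle=0$ for every $X$, in particular for $X={\rm grad}|\tau(\phi)|^2$; and the trace term simplifies via (\ref{je8629}), which now reads ${\rm Tr}\langle {\rm d}\phi,\nabla^\phi\tau(\phi)\rangle=-|\tau(\phi)|^2$. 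Substituting $|V|^2=\langle V,\tau(\phi)\rangle=|\tau(\phi)|^2$ into the surviving terms of (\ref{2VfO}) collapses the bracket to $|\tau(\phi)|^4+|\tau(\phi)|^4+2|\tau(\phi)|^4=4|\tau(\phi)|^4$, so that
\begin{equation}\notag
\frac{{\rm d}^2}{{\rm d}t^2}E_2(\phi_t)|_{t=0}=-4c\int_M|\tau(\phi)|^4\,{\rm d}v_g.
\end{equation}

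Finally, since $c>0$ this integral is nonpositive, while stability forces it to be nonnegative; hence $\int_M|\tau(\phi)|^4\,{\rm d}v_g=0$, which gives $\tau(\phi)\equiv 0$ on $M$, i.e.\ $\phi$ is harmonic. I do not expect a serious analytic obstacle, since the argument is a direct substitution once the right test field is identified; the only point requiring care is the correct reading of the conservation law ${\rm div}\,S(\phi)=0$ as the pointwise identity $\langle {\rm d}\phi(\cdot),\tau(\phi)\rangle\equiv 0$, since it is exactly this identity that kills the two cross terms and converts (\ref{je8629}) into the clean relation ${\rm Tr}\langle {\rm d}\phi,\nabla^\phi\tau(\phi)\rangle=-|\tau(\phi)|^2$. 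The substantive content of the theorem is thus packaged entirely in the choice $V=\tau(\phi)$ together with the improved formula (\ref{2VfO}).
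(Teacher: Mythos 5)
Your argument is correct: the first conservation law is precisely the pointwise identity $\langle {\rm d}\phi(X),\tau(\phi)\rangle=0$ for all $X$, and substituting $V=\tau(\phi)$ into (\ref{2VfO}) with this identity and (\ref{je8629}) does collapse the bracket to $4|\tau(\phi)|^4$, giving $\frac{{\rm d}^2}{{\rm d}t^2}E_2(\phi_t)|_{t=0}=-4c\int_M|\tau(\phi)|^4\,{\rm d}v_g$ and hence $\tau(\phi)\equiv 0$ by stability. Note that the paper does not actually prove this theorem --- it is quoted from \cite{Jiang86} --- but your proof is essentially identical in structure to the paper's own proof of Theorem \ref{MT2}, where the same test field $V=\tau(\phi)$ is used and the hypothesis $|\tau(\phi)|^2={\rm constant}$ (together with the divergence theorem) plays the role that the conservation law plays in your argument for killing the terms involving ${\rm div}\langle{\rm d}\phi,\tau(\phi)\rangle$ and ${\rm d}\phi({\rm grad}\langle V,\tau(\phi)\rangle)$.
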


\begin{theorem}\cite{Jiang86}\label{JiangStable}
A stable biharmonic map $\phi: (M^m, g) \to \mathbb{C}P^n$   from a compact manifold into a complex projective space with constant holomorphic sectional curvature $c>0$ satisfying the first conservation law is a harmonic map provided one of the following holds:
\noindent
{\rm (i)}   $|\tau(\phi)|^2> 3\sqrt{2\,e(\phi)}\,|\nabla^{\phi}\tau(\phi)|$ at any point in $M$; or
\noindent
{\rm (ii)}  $|\tau(\phi)|^2={\rm constant}$ and $|\tau(\phi)|> 6\sqrt{c}\, e(\phi)$.
\end{theorem}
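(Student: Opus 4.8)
The plan is to test the second variation against the distinguished field $V=\tau(\phi)$ and to exploit the special algebraic form of the Fubini--Study curvature. Since $\mathbb{C}P^n$ is a symmetric space we have $\nabla^N{\rm R}^N=0$, so the curvature-derivative terms of (\ref{2Vformula}) vanish and I may start from the reduced formula (\ref{2Vfo1}), whose derivation used only $\nabla^N{\rm R}^N=0$ and so is valid here. Choosing $V=\tau(\phi)$ triggers three simplifications simultaneously: biharmonicity gives $J^{\phi}(\tau(\phi))=-\tau_2(\phi)=0$, killing the $|J^{\phi}(V)|^2$ term; skew-symmetry of ${\rm R}^N$ gives ${\rm R}^N(\tau(\phi),\tau(\phi),\tau(\phi),\tau(\phi))=0$; and the two cross-curvature terms become identical. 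Hence
\[
\frac{{\rm d}^2}{{\rm d}t^2}E_2(\phi_t)\big|_{t=0}=-4\int_M\sum_{i=1}^m\langle\tau(\phi),{\rm R}^N({\rm d}\phi(e_i),\tau(\phi))\nabla^{\phi}_{e_i}\tau(\phi)\rangle\,{\rm d}v_g.
\]

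Next I would substitute the holomorphic-sectional-curvature tensor (with $J$ the complex structure of $\mathbb{C}P^n$)
\[
{\rm R}^N(X,Y)Z=\tfrac{c}{4}\big[\langle Y,Z\rangle X-\langle X,Z\rangle Y+\langle JY,Z\rangle JX-\langle JX,Z\rangle JY-2\langle JX,Y\rangle JZ\big]
\]
and invoke the first conservation law: ${\rm div}\,S(\phi)=0$ is equivalent to $\langle{\rm d}\phi(X),\tau(\phi)\rangle=0$ for all $X$. This eliminates every term carrying a factor $\langle{\rm d}\phi(e_i),\tau(\phi)\rangle$, and through the identity (\ref{je8629}) it turns $\sum_i\langle{\rm d}\phi(e_i),\nabla^{\phi}_{e_i}\tau(\phi)\rangle$ into $-|\tau(\phi)|^2$. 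Collecting terms (using $\langle J\tau,\tau\rangle=0$ and $\langle J\nabla^{\phi}_{e_i}\tau,\tau\rangle=-\langle J\tau,\nabla^{\phi}_{e_i}\tau\rangle$) I expect to arrive at
\[
\frac{{\rm d}^2}{{\rm d}t^2}E_2(\phi_t)\big|_{t=0}=-c\int_M\Big[|\tau(\phi)|^4+3\sum_{i=1}^m\langle J\tau(\phi),\nabla^{\phi}_{e_i}\tau(\phi)\rangle\langle J{\rm d}\phi(e_i),\tau(\phi)\rangle\Big]{\rm d}v_g,
\]
so stability forces the bracketed integral to be $\le 0$.

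I would then estimate the cross term by Cauchy--Schwarz together with the fact that $J$ is an isometry, giving $\sum_i\langle J\tau,\nabla^{\phi}_{e_i}\tau\rangle^2\le|\tau|^2|\nabla^{\phi}\tau|^2$ and $\sum_i\langle J{\rm d}\phi(e_i),\tau\rangle^2\le 2e(\phi)|\tau|^2$, hence the pointwise lower bound $|\tau|^4+3\sum_i(\cdots)\ge|\tau|^2\big(|\tau|^2-3\sqrt{2e(\phi)}\,|\nabla^{\phi}\tau|\big)$. Under hypothesis (i) the factor in parentheses is positive wherever $\tau\neq0$, and (i) itself cannot hold at a point where $\tau=0$; so the integrand is strictly positive, contradicting $\le0$ and forcing $\tau\equiv0$. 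For hypothesis (ii) I would first convert the constancy of $|\tau|^2$ into a bound on $|\nabla^{\phi}\tau|$: the Weitzenb\"ock identity $\tfrac12\Delta|\tau|^2=\langle\bar\Delta\tau,\tau\rangle+|\nabla^{\phi}\tau|^2$ combined with the biharmonic equation $\bar\Delta\tau=\sum_i{\rm R}^N({\rm d}\phi(e_i),\tau){\rm d}\phi(e_i)$ and the same curvature computation yields $|\nabla^{\phi}\tau|^2=\tfrac{c}{4}\big(2e(\phi)|\tau|^2+3\sum_i\langle J{\rm d}\phi(e_i),\tau\rangle^2\big)\le 2c\,e(\phi)|\tau|^2$, so $|\nabla^{\phi}\tau|\le\sqrt{2c\,e(\phi)}\,|\tau|$. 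Inserting this into the lower bound produces $|\tau|^3\big(|\tau|-6\sqrt{c}\,e(\phi)\big)$, which is positive under (ii), again contradicting stability unless $\tau\equiv0$; in both cases $\phi$ is harmonic.

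The main obstacle I anticipate is the algebraic bookkeeping with the Fubini--Study tensor: tracking the complex-structure contractions $\langle JX,\cdot\rangle$ correctly, checking that the conservation law cancels exactly the ``holomorphic'' terms it should, and verifying that the Weitzenb\"ock computation reproduces the identical curvature expression (with the correct sign) so that the two estimates close with precisely the sharp constants $3\sqrt{2e(\phi)}$ and $6\sqrt{c}\,e(\phi)$ of (i) and (ii). Once the sign conventions for $J^{\phi}$, $\bar\Delta$ and ${\rm R}^N$ are fixed consistently, the analytic step (Cauchy--Schwarz plus the positivity argument over the compact $M$) is routine.
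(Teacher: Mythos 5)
The paper offers no proof of this statement: it is quoted verbatim from \cite{Jiang86} as background, so there is nothing internal to compare your argument against. Judged on its own, your reconstruction is essentially correct and is, in fact, the classical argument: test the second variation with $V=\tau(\phi)$, use $\nabla^N{\rm R}^N=0$ and $J^{\phi}(\tau(\phi))=0$ to reduce (\ref{2Vformula}) to the single cross term $-4\int_M\sum_i\langle\tau(\phi),{\rm R}^N({\rm d}\phi(e_i),\tau(\phi))\nabla^{\phi}_{e_i}\tau(\phi)\rangle\,{\rm d}v_g$, expand the Fubini--Study tensor, and let the conservation law (via $\langle{\rm d}\phi(X),\tau(\phi)\rangle\equiv0$ and the identity (\ref{je8629})) produce the $|\tau(\phi)|^4$ term plus the $J$-cross term; your Cauchy--Schwarz bounds $\sum_i\langle J\tau,\nabla^{\phi}_{e_i}\tau\rangle^2\le|\tau|^2|\nabla^{\phi}\tau|^2$ and $\sum_i\langle J{\rm d}\phi(e_i),\tau\rangle^2\le2e(\phi)|\tau|^2$ then give exactly the constant $3\sqrt{2e(\phi)}$ of (i), and the Weitzenb\"ock/biharmonicity computation of $|\nabla^{\phi}\tau|^2$ under $|\tau|^2={\rm constant}$ gives $6\sqrt{c}\,e(\phi)$ for (ii). This is the same test-field strategy the paper itself uses to prove Theorem \ref{MT2} for real space forms, so your route is the natural extension of the method on display here. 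Two small points deserve care in a final write-up: the sign conventions relating ${\rm R}^N$ in (\ref{JO}) and (\ref{2Vformula}) to the Fubini--Study tensor you quote must be checked against the space-form case already worked out in the paper (where ${\rm R}^N(V,\tau,V,\tau)=c(|V|^2|\tau|^2-\langle V,\tau\rangle^2)$ fixes the convention), and the logical endgame under hypothesis (i) is really a nonexistence statement (strict inequality at every point already forces $\tau\neq0$ everywhere, so stability is contradicted outright); neither affects the substance of the proof.
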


For biharmonic maps into real space form, we can prove the following theorem which replaces the assumption that $\phi$ satisfies the first conservation law in Theorem \ref{JiangStable} by requiring $|\tau(\phi)|^2$ be constant.

\begin{theorem}\label{MT2}
There exists no stable proper biharmonic map $\phi:(M^m,g)\to (N^n, h)$ from a compact Riemannian manifold without boundary into a space form of positive sectional curvature with $|\tau(\phi)|^2={\rm constant}$.
\end{theorem}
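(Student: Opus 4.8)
The plan is to test the stability inequality against the single, canonical variation field $V=\tau(\phi)$ and show that the resulting second variation is strictly negative unless $\tau(\phi)$ vanishes identically. First I would substitute $V=\tau(\phi)$ into the second variation formula (\ref{2VfO}). Since $\phi$ is biharmonic we have $\tau_2(\phi)=-J^{\phi}(\tau(\phi))=0$, hence $J^{\phi}(\tau(\phi))=0$ and the leading term $|J^{\phi}(V)|^2$ drops out completely. Next, the hypothesis that $|\tau(\phi)|^2$ is constant immediately annihilates the last term in the bracket, because ${\rm grad}\,\langle V,\tau(\phi)\rangle={\rm grad}\,|\tau(\phi)|^2=0$.

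For the surviving terms I abbreviate $b:=|\tau(\phi)|^2$. With $V=\tau(\phi)$, the first two terms contribute $b^2+b^2=2b^2$. For the fourth term I apply the identity (\ref{je8629}), which gives ${\rm Tr}\,\langle {\rm d}\phi,\nabla^{\phi}\tau(\phi)\rangle={\rm div}\,\langle {\rm d}\phi,\tau(\phi)\rangle-b$, so that the fourth term becomes $-2b\big({\rm div}\,\langle {\rm d}\phi,\tau(\phi)\rangle-b\big)=-2b\,{\rm div}\,\langle {\rm d}\phi,\tau(\phi)\rangle+2b^2$; the third term is $-2b\,{\rm div}\,\langle {\rm d}\phi,\tau(\phi)\rangle$. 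Collecting, the bracket collapses to $4b^2-4b\,{\rm div}\,\langle {\rm d}\phi,\tau(\phi)\rangle$. Now I integrate over $M$: since $b$ is a \emph{constant} it factors out of the integral, and the divergence term integrates to zero by the divergence theorem on the closed manifold $M$. What remains is
\[
\frac{{\rm d}^2}{{\rm d}t^2}E_2(\phi_t)\big|_{t=0}=-4c\,b^2\,{\rm Vol}(M)=-4c\,|\tau(\phi)|^4\,{\rm Vol}(M)\le 0.
\]
Stability forces the left side to be nonnegative, so $c\,|\tau(\phi)|^4\,{\rm Vol}(M)=0$; since $c>0$ and ${\rm Vol}(M)>0$ this yields $\tau(\phi)\equiv 0$, i.e.\ $\phi$ is harmonic and therefore not proper biharmonic. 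Hence no such stable proper biharmonic map can exist.

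The entire difficulty is concentrated in the choice $V=\tau(\phi)$ and in recognizing the threefold collapse it triggers: biharmonicity kills $|J^{\phi}V|^2$, constancy of $|\tau(\phi)|^2$ kills the gradient term, and — crucially — that same constancy lets the factor $b$ come outside the integral so that the divergence term disappears by Stokes rather than having to be controlled pointwise. This last point is exactly what lets us replace Jiang's first conservation law hypothesis (${\rm div}\,S(\phi)=0$) in Theorem \ref{JiangStable} by the single assumption that $|\tau(\phi)|^2$ is constant: we never need ${\rm div}\,\langle {\rm d}\phi,\tau(\phi)\rangle$ to vanish pointwise, only that its integral vanish, which is automatic once $b$ is constant. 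Beyond spotting this, the remaining work is only the routine bookkeeping of the five bracket terms, so I do not expect any genuine analytic obstacle.
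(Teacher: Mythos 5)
Your proposal is correct and follows essentially the same route as the paper's own proof: taking $V=\tau(\phi)$ in (\ref{2VfO}), using identity (\ref{je8629}) and the constancy of $|\tau(\phi)|^2$ to reduce the bracket to $4|\tau(\phi)|^4-4|\tau(\phi)|^2\,{\rm div}\langle {\rm d}\phi,\tau(\phi)\rangle$, and killing the divergence term by integration. Your remark that biharmonicity forces $J^{\phi}(\tau(\phi))=0$ is a point the paper leaves implicit, but the argument is otherwise identical.
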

\begin{proof}
Applying the second variation formula (\ref{2VfO}) with $V=\tau(\phi)$ and a further computation, we have
\begin{equation}\label{o2Vst}
\begin{aligned}
0\; &\le \frac{{\rm d}^2 }{{\rm d} t^2}E_2(\phi_t)|_{t=0}\\&=\int_M \big\{|J^{\phi}(\tau(\phi))|^2-c\big[|\tau(\phi)|^2|\tau(\phi)|^2+\langle \tau(\phi), \tau(\phi)\rangle^2\\
&\;\; -2|\tau(\phi)|^2\,{\rm div}\,\langle {\rm d}\phi, \tau(\phi)\rangle-2\langle \tau(\phi), \tau(\phi)\rangle\, {\rm Tr}\,\langle {\rm d} \phi, \nabla^{\phi}\tau(\phi)\rangle\\
&\;\; +2\langle \tau(\phi),{\rm d}\phi(\nabla |\tau(\phi)|^2) \rangle\big]\big\}{\rm d}v_g\\
&=-4c\int_M\left( |\tau(\phi)|^4-|\tau(\phi)|^2\,{\rm div}\,\langle {\rm d}\phi, \tau(\phi)\rangle\right){\rm d}v_g\\
&=-4c\, |\tau(\phi)|^4\,{\rm Vol}(M)\le 0, \;\;{\rm since\;c>0},
\end{aligned}
\end{equation}
where the second equality was obtained by using the assumption that $|\tau(\phi)|^2$ is constant and  Equation (\ref{je8629}) whilst the third equality holds by the assumption that $|\tau(\phi)|^2$ is constant and the divergence theorem. It follows from (\ref{o2Vst}) that $\tau(\phi)\equiv 0$, and hence $\phi$ is a harmonic map.
\end{proof}
\begin{remark}
(i) As  an isometric immersion  always satisfies the first conservation law, so it follows from Theorem \ref{JiangStable} that  there exists no stable proper biharmonic compact submanifold in a Euclidean sphere $S^n$.\\
\noindent (ii) One can check that the proper biharmonic maps $S^{2n-1}\to S^n\to S^{n+1}\;(n=2, 4, 8)$ obtained from the composition of Hopf fibration followed by the inclusion ${\bf i}: S^n\to S^{n+1}$ with ${\bf i}(x)=(x, 1)/\sqrt{2}$ satisfies $|\tau(\phi)|^2={\rm constant}$, so by Theorem \ref{MT2}, it is unstable. This was also confirmed by Theorem 6.1 in \cite{LO07} which states that the composition of a  non-constant eigenmap $\varphi:(M^m, g)\to S^n(\frac{1}{\sqrt{2}})$ from a compact manifold followed by the inclusion $i:S^n(\frac{1}{\sqrt{2}})\to S^{n+1}$ is an unstable proper biharmonic map.\\
\noindent(iii) As we know  (see \cite{Le82})  that any stable harmonic map $\phi: (M^m,g)\to S^n$ from a compact Riemannian manifold into a Euclidean sphere is constant. Based on the results given in Theorems \ref{JiangStable} and  \ref{MT2}, it would be interesting to know whether there exists a stable proper biharmonic map $\phi: (M^m,g)\to S^n$ from a compact manifold into a Euclidean sphere.
\end{remark}

Note that  for biharmonic maps into a sphere we also have the following form of the second variation formula.
\begin{theorem} {\rm \cite{Oni02b}}
For a biharmonic map $\phi: (M^m, g) \to S^n$ from a compact manifold into a sphere, the Hessian of the bienergy at $\phi$ is
\begin{equation}\notag
H(E_2)_{\phi}(V,W)=\int_M\langle I^{\phi}(V), W\rangle{\rm d}v_g,
\end{equation}
where
\begin{equation}\notag
\begin{aligned}
I^{\phi}(V)=&\Delta^{\phi}(\Delta^{\phi} V)-\Delta^{\phi}({\rm Tr}\langle V, {\rm d}\phi (\cdot)\rangle {\rm d}\phi (\cdot)-|{\rm d}\phi|^2V)+ 2\langle {\rm d}\tau(\phi), {\rm d}\phi(\cdot) \rangle V\\
&+|\tau(\phi)|^2V-2 {\rm Tr}\langle V, {\rm d}\tau(\phi) (\cdot)\rangle {\rm d}\phi (\cdot) -2 {\rm Tr}\langle \tau(\phi) , {\rm d}V (\cdot)\rangle {\rm d}\phi (\cdot)\\
&-\langle\tau(\phi), V\rangle \tau(\phi)+2\langle {\rm d}V, {\rm d}\phi\rangle \tau(\phi)-{\rm Tr}\langle  {\rm d}\phi (\cdot) , \Delta^{\phi} V\rangle {\rm d}\phi( \cdot )\\
&+{\rm Tr}\langle {\rm d}\phi (\cdot),  {\rm Tr}\langle V , {\rm d}\phi (\cdot)\rangle {\rm d}\phi (\cdot)\rangle {\rm d}\phi (\cdot)-2|{\rm d}\phi|^2{\rm Tr}\langle {\rm d}\phi (\cdot) , V\rangle {\rm d}\phi (\cdot)\\
&+|{\rm d}\phi|^2\Delta^{\phi} V+|{\rm d}\phi|^4V
\end{aligned}
\end{equation}
with $\Delta^{\phi} V=\sum_{i=1}^m(\nabla^{\phi}_{e_i}\nabla^{\phi}_{e_i}-\nabla^{\phi}_{\nabla^M_{e_i}{e_i}})V$ denoting the Rough Laplacian along the map $\phi$.
\end{theorem}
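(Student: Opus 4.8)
The plan is to realize the stated fourth-order operator $I^\phi$ as the self-adjoint operator representing the quadratic form $\frac{{\rm d}^2}{{\rm d}t^2}E_2(\phi_t)|_{t=0}$, so that $\frac{{\rm d}^2}{{\rm d}t^2}E_2(\phi_t)|_{t=0}=\int_M\langle I^\phi(V),V\rangle\,{\rm d}v_g$, and then to polarize to get the bilinear Hessian $H(E_2)_\phi(V,W)=\int_M\langle I^\phi(V),W\rangle\,{\rm d}v_g$. Rather than recompute the second variation from scratch, I would start from Jiang's formula (\ref{2Vformula}), which already supplies $\frac{{\rm d}^2}{{\rm d}t^2}E_2(\phi_t)|_{t=0}$; the remaining work is to specialize it to $N=S^n$ and to reorganize the leading term $|J^\phi(V)|^2$ by integration by parts into the form $\langle(\text{operator})V,\,V\rangle$.

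First I would use that $S^n$ has parallel curvature tensor, so the two terms in (\ref{2Vformula}) containing $\nabla^N{\rm R}^N$ vanish, leaving the leading term $|J^\phi(V)|^2$, the algebraic term ${\rm R}^N(V,\tau(\phi),V,\tau(\phi))$, and the two first-order curvature terms built from $2{\rm R}^N({\rm d}\phi(e_i),V)\nabla^\phi_{e_i}\tau(\phi)$ and $2{\rm R}^N({\rm d}\phi(e_i),\tau(\phi))\nabla^\phi_{e_i}V$. I would then insert the constant-curvature tensor ${\rm R}^N(X,Y)Z=\langle Y,Z\rangle X-\langle X,Z\rangle Y$ everywhere. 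In particular the Jacobi operator (\ref{JO}) becomes $J^\phi(V)=-\Delta^\phi V+Q(V)$, where $Q(V)={\rm Tr}\langle V,{\rm d}\phi(\cdot)\rangle{\rm d}\phi(\cdot)-|{\rm d}\phi|^2V$ is a pointwise self-adjoint zeroth-order endomorphism, and each remaining curvature contraction collapses into inner products.

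The heart of the computation is the expansion $|J^\phi(V)|^2=|\Delta^\phi V|^2-2\langle\Delta^\phi V,Q(V)\rangle+|Q(V)|^2$. Integrating the first piece by parts (the rough Laplacian being $L^2$-self-adjoint) gives $\int_M\langle\Delta^\phi\Delta^\phi V,V\rangle$, hence the top-order term $\Delta^\phi(\Delta^\phi V)$. Symmetrizing the cross term and moving one $\Delta^\phi$ across yields $-\Delta^\phi Q(V)-Q(\Delta^\phi V)$, which is exactly $-\Delta^\phi({\rm Tr}\langle V,{\rm d}\phi(\cdot)\rangle{\rm d}\phi(\cdot)-|{\rm d}\phi|^2V)-{\rm Tr}\langle{\rm d}\phi(\cdot),\Delta^\phi V\rangle{\rm d}\phi(\cdot)+|{\rm d}\phi|^2\Delta^\phi V$; and since $Q$ is self-adjoint, $|Q(V)|^2=\langle Q(Q(V)),V\rangle$ unwinds into ${\rm Tr}\langle{\rm d}\phi(\cdot),{\rm Tr}\langle V,{\rm d}\phi(\cdot)\rangle{\rm d}\phi(\cdot)\rangle{\rm d}\phi(\cdot)-2|{\rm d}\phi|^2{\rm Tr}\langle{\rm d}\phi(\cdot),V\rangle{\rm d}\phi(\cdot)+|{\rm d}\phi|^4V$. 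The algebraic term contributes $|\tau(\phi)|^2V-\langle\tau(\phi),V\rangle\tau(\phi)$, and the two first-order curvature terms, after inserting the sphere's curvature, supply $2\langle{\rm d}\tau(\phi),{\rm d}\phi(\cdot)\rangle V-2{\rm Tr}\langle V,{\rm d}\tau(\phi)(\cdot)\rangle{\rm d}\phi(\cdot)-2{\rm Tr}\langle\tau(\phi),{\rm d}V(\cdot)\rangle{\rm d}\phi(\cdot)+2\langle{\rm d}V,{\rm d}\phi\rangle\tau(\phi)$. Collecting all contributions assembles $I^\phi(V)$ precisely as stated.

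I expect the main obstacle to be bookkeeping rather than any conceptual difficulty. One must keep track of a large number of traces and, above all, symmetrize the cross term $-2\int_M\langle\Delta^\phi V,Q(V)\rangle$ correctly; it is this symmetrization that splits the contribution into $-\Delta^\phi Q(V)$ and $-Q(\Delta^\phi V)$ with coefficient $1$ rather than producing $-2\Delta^\phi Q(V)$. One must also keep the signs straight when rewriting each curvature term through ${\rm R}^N(X,Y)Z=\langle Y,Z\rangle X-\langle X,Z\rangle Y$. Because the starting point (\ref{2Vformula}) is already the fully derived second variation, no Weitzenb\"ock-type commutation of covariant derivatives is needed at this stage, so the entire argument reduces to honest but routine integration by parts and linear algebra.
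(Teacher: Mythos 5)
The paper does not prove this statement: it is quoted verbatim from \cite{Oni02b} and stated without proof, so there is no in-paper argument to compare yours against. Judged on its own terms, your outline is the natural derivation and the bookkeeping you describe does check out. Writing $J^{\phi}(V)=-\Delta^{\phi}V+Q(V)$ with $Q(V)={\rm Tr}\langle V,{\rm d}\phi(\cdot)\rangle{\rm d}\phi(\cdot)-|{\rm d}\phi|^2V$, the three pieces of $|J^{\phi}(V)|^2$ yield, after integration by parts, exactly $\Delta^{\phi}\Delta^{\phi}V$, $-\Delta^{\phi}Q(V)-Q(\Delta^{\phi}V)$ and $Q(Q(V))$, which account for the fourth-order term, the two second-order groups, and the three purely algebraic $|{\rm d}\phi|$-terms; the curvature term ${\rm R}^N(V,\tau(\phi),V,\tau(\phi))$ gives $|\tau(\phi)|^2V-\langle\tau(\phi),V\rangle\tau(\phi)$; and inserting ${\rm R}^N(X,Y)Z=\langle Y,Z\rangle X-\langle X,Z\rangle Y$ into the two first-order curvature terms of (\ref{2Vformula}) produces precisely the four remaining terms, with no further integration by parts. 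So the quadratic form identity $\frac{{\rm d}^2}{{\rm d}t^2}E_2(\phi_t)|_{t=0}=\int_M\langle I^{\phi}(V),V\rangle\,{\rm d}v_g$ follows as you say.

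The one genuine gap is the final polarization step. A quadratic form only determines the \emph{symmetrization} of the representing operator: from $\int_M\langle I^{\phi}(V),V\rangle=H(E_2)_{\phi}(V,V)$ you may conclude $H(E_2)_{\phi}(V,W)=\tfrac12\int_M\langle (I^{\phi}+(I^{\phi})^{*})V,W\rangle\,{\rm d}v_g$, but to land on the stated formula $H(E_2)_{\phi}(V,W)=\int_M\langle I^{\phi}(V),W\rangle\,{\rm d}v_g$ with this particular, non-symmetrized expression for $I^{\phi}$ you must additionally verify that $I^{\phi}$ is formally self-adjoint. This is not obvious term by term: for instance $-2\,{\rm Tr}\langle V,{\rm d}\tau(\phi)(\cdot)\rangle{\rm d}\phi(\cdot)$ and $-2\,{\rm Tr}\langle\tau(\phi),{\rm d}V(\cdot)\rangle{\rm d}\phi(\cdot)$ are individually not self-adjoint, and one must check that their adjoint defects cancel against those of the other first-order terms (using the biharmonicity of $\phi$ where needed). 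Either supply that verification, or avoid it altogether by running the second variation with a genuine two-parameter variation so that the bilinear form $H(E_2)_{\phi}(V,W)$, rather than its diagonal, is computed from the start. With that point addressed, your argument is a legitimate route to the formula, independent of whatever method \cite{Oni02b} uses.
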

For some further study of the second variation of bienergy and the stability and indices of  biharmonic maps see \cite{Jiang86, Oni02b, LO07, MOR19} and the references therein.


\begin{thebibliography}{199}
\bibitem{Al99} E. Almansi, {\em Sull'integrazione dell'equazione differenziale $\Delta^{2n}=0$},  Annali di Mat., 2(1899), 1--51.

\bibitem{Ba08} P. Baird, {\em  Stress-energy tensors and the Lichnerowicz Laplacian}, J. Geom. Phys.,  58 (2008), 1329-1342.

\bibitem{BO18}  P. Baird and  Y. -L. Ou, {\em Biharmonic conformal maps in dimension four and equations of Yamabe-type},  J. Geom. Anal. 28(4) (2018),  3892-3905.

\bibitem{BMO07a} A.  Balmu\c s, S. Montaldo, C. and Oniciuc, {\em Biharmonic maps between warped product manifolds},  J. Geom. Phys., 57. (2007), 449--466.


\bibitem{Ch17}  B. -Y. Chen, {\em Differential geometry of warped product manifolds and submanifolds}, World Scientific, Hackensack, N. J., 2017. 

\bibitem{GW79} R. E. Greene and H. Wu, {\em Function theory on manifolds which possess a pole}, Lecture Notes in Mathematics, 699. Springer, Berlin, 1979. 

\bibitem{Jiang86} G. Y. Jiang, {\em $2$-Harmonic maps and their first and second variational formulas}, Chin. Ann. Math. Ser. A,  7(1986), 389-402.
\bibitem{Le82} P. -F. Leung,  {\em On the stability of harmonic maps}, in  Lecture Notes in Math. 949, Springer Verlag 1982, 122--129.
\bibitem{MR13} S. Montaldo and A. Ratto,  {\em A general approach to equivariant biharmonic maps},  Mediterr. J. Math., 10 (2013), 1127--1139.
\bibitem{MOR15} S. Montaldo, C.  Oniciuc,  and A. Ratto, {\em Rotationally symmetric biharmonic maps between models},  J. Math. Anal. Appl.,  431 (2015), 4494--505.
\bibitem{MOR19}  S. Montaldo, C.  Oniciuc,  and A. Ratto, {\em Index and nullity of proper biharmonic maps in spheres}, preprint, 2019
\bibitem{LO07}  E. Loubeau and  C. Oniciuc, {\em On the biharmonic and harmonic indices of the Hopf map},  Trans. Amer. Math. Soc., 359(11) (2007),  5239--5256.
\bibitem{Ou09} Y. -L. Ou, {\em On conformal biharmonic immersions}, Ann. Global Anal. Geom. 36, 2009, 133-142.
\bibitem{OC20} Y. -L. Ou and B. -Y. Chen, {\em Biharmonic submanifolds and biharmonic maps in Riemasnnian geometry}, World Scientific, Hackensack, N. J., 2020. 
\bibitem{Oni02b} C. Oniciuc, {\em On the second variation formula for biharmonic maps to a sphere},  Publ. Math. Debrecen,  61 (2002), 613--622.
\bibitem{WOY14} Z. -P. Wang, Y. -L. Ou and H. -C. Yang, {\em Biharmonic maps from a 2-sphere},  J. Geom. Phys.,  77 (2014), 86--96.

\end{thebibliography}
\end{document}